\newif\ifHYPER\global\HYPERtrue
\newif\ifJOURNAL\global\JOURNALfalse
\definecolor{ks-green}{rgb}{0.0,0.7,0.0}
\definecolor{ks-red}{rgb}{0.7,0.0,0.0}
\definecolor{ks-blue}{rgb}{0.0,0.0,0.7}
\newtheorem{theorem}{Theorem}
\newtheorem{lemma}[theorem]{Lemma}
\newtheorem{corollary}[theorem]{Corollary}
\newtheorem{proposition}[theorem]{Proposition}
\newtheorem{definition}[theorem]{Definition}
\newtheorem{notation}[theorem]{Notation}
\newtheorem{example}[theorem]{Example}
\newtheorem{remark}[theorem]{Remark}
\newtheorem{Example}[theorem]{Example}
\newtheorem{Remark}[theorem]{Remark}
\newtheorem{algorithm}[theorem]{Algorithm}
\numberwithin{equation}{section}
\theoremstyle{plain}
\newtheorem{theorem}[equation]{Theorem}
\newtheorem{lemma}[equation]{Lemma}
\newtheorem{corollary}[equation]{Corollary}
\newtheorem{proposition}[equation]{Proposition}
\newtheorem{algorithm}[equation]{Algorithm}
\theoremstyle{definition}
\newtheorem{definition}[equation]{Definition}
\newtheorem{Example}[equation]{Example}
\newtheorem{Remark}[equation]{Remark}
\newenvironment{example}{\emph{Example.}}{}
\newenvironment{remark}{\emph{Remark.}}{}
\newenvironment{notation}{\emph{Notation.}}{}
\newcommand{\mthstrut}{\rule[-0.5ex]{0pt}{2.2ex}}
\newcommand{\lnum}[1]{\makebox[1em][r]{\textnormal{\footnotesize{#1}}}}
\newenvironment{algtest}{\bgroup\bfseries\upshape
  \tabbing
    \hspace*{2em}\=
    \hspace*{1.5em}\=\hspace*{1.5em}\=%
    \hspace*{1.5em}\=\hspace*{1.5em}\= \kill \>}{%
  \endtabbing\egroup}
\newcommand{\block}[1]{\underline{#1}}
\newcommand{\tsfrac}[2]{\textstyle{\frac{#1}{#2}}}
\newcommand{\ldivs}{\!\mid_{\mkern-1mu\text{l}}\!}
\newcommand{\rdivs}{\!\mid_{\mkern-1mu\text{r}}\!}
\newcommand{\atoms}{\mathbf{A}}
\def\trp{^{\!\top}}
\def\inv{^{-1}}
\def\numN{\mathbb{N}}
\def\numQ{\mathbb{Q}}
\def\numR{\mathbb{R}}
\def\revddots{\mathinner{\mkern1mu\raise1pt\vbox{\kern7pt\hbox{.}}\mkern2mu
  \raise4pt\hbox{.}\mkern2mu\raise7pt\hbox{.}\mkern1mu}}
\newcommand{\ncRATS}[2]{#1^{\text{rat}}\langle\!\langle #2\rangle\!\rangle}
\newcommand{\freeALG}[2]{#1\langle #2\rangle}
\newcommand{\freeFLD}[2]{#1(\!\langle #2\rangle\!)}
\DeclareMathOperator{\rank}{rank}
\DeclareMathOperator{\linsp}{span}
\newcommand{\field}[1]{\mathbb{#1}}
\newcommand{\als}[1]{\mathcal{#1}}
\newcommand{\ideal}[1]{\mathfrak{#1}}
\newcommand{\complexity}{\mathcal{O}}
\newcommand{\aclo}[1]{\overline{#1}}
\newcommand{\length}[1]{\vert #1 \vert}
\begin{document}
\ifJOURNAL
\begin{frontmatter}
\fi
\title{On the Factorization of\\
  Non-Commutative Polynomials\\
  (in Free Associative Algebras)}
\ifJOURNAL
\author{Konrad Schrempf}
\address{Universität Wien, Fakultät für Mathematik,
    Oskar-Morgenstern-Platz~1, 1090 Wien, Austria}
    \ead{math@versibilitas.at}
\else
\author{Konrad Schrempf%
  \footnote{%
    Contact: math@versibilitas.at,
    Universität Wien, Fakultät für Mathematik,
    Oskar-Morgenstern-Platz~1, 1090 Wien, Austria.
    Supported by the Austrian FWF Project P25510-N26
``Spectra on Lamplighter groups and Free Probability''}
}
\fi

\ifJOURNAL
\else
\maketitle
\fi

\begin{abstract}
We describe a simple approach to factorize non-commutative
polynomials, that is, elements in free associative algebras
(over a \emph{commutative} field),
into atoms (irreducible elements) based on (a special form of)
their minimal linear representations.
To be more specific, a correspondence between factorizations
of an element and upper right blocks of zeros in the system matrix
(of its representation) is established.
The problem is then reduced
to solving a system of polynomial equations (with at most quadratic
terms) with \emph{commuting} unknowns
to compute appropriate transformation matrices (if possible).
\end{abstract}

\ifJOURNAL
\begin{keyword}
\else
{\bfseries Keywords:}
\fi
free associative algebra, factorization of polynomials,
minimal linear representation, companion matrix, free field, non-commutative
formal power series
\ifJOURNAL
\end{keyword}
\end{frontmatter}
\else
{\bfseries AMS Classification:} Primary 16K40, 16Z05; Secondary 16G99, 16S10
\fi

\section*{Introduction}

Considering non-commutative (nc for short) polynomials (elements of the
\emph{free associative algebra}) as elements in its
\emph{universal field of fractions} (free field) 
seems ---at a first glance--- like to take a sledgehammer
to crack a nut. It is maybe more common to view them
as \emph{nc rational series} (or \emph{nc formal power series}).
Therefore one could skip the rather complicated theory behind
free fields, briefly introduced in 
\cite[Section~9.3]{Cohn2003b}
. For details we refer to 
\cite[Section~6.4]{Cohn1995a}
.

However, as we shall see, the \emph{normal form}
of \ifJOURNAL\else Cohn and Reutenauer \fi
\cite{Cohn1994a}
\ provides new insights.
For a given element (in the \emph{free field}) \emph{minimal}
linear representations can be transformed into each other
by invertible matrices over the ground field.
In our case we are interested in the (finite set of)
transformation matrices which yield \emph{all} possible
factorizations (modulo insertion of units and permutation of
commuting factors)
of a given polynomial.

Here we use the concept of \emph{admissible linear systems}
(ALS for short) of
\cite{Cohn1972a}
, closely related to linear representations.
At any step, an ALS (for a nc polynomial)
can be easily transformed into a \emph{proper linear system}
\cite[Section~II.1]{Salomaa1978a}
.

A more general concept for the factorization of arbitrary
elements in free fields (for example non-commutative
rational formal power series) is considered in future work.
There (left and right) divisors will be defined
on the level of linear representations. In this way
additional structure compensates what is missing in fields
(when every non-zero element is invertible).

\medskip
Section~\ref{sec:pf.faa} provides the notation and the basic
(algebraic) setup.
Section~\ref{sec:pf.fnp} contains the main result,
Theorem~\ref{thr:pf.factorization}, which describes a correspondence
between factorizations and upper right blocks of zeros
(in the system matrix). To be able to formulate it,
Proposition~\ref{pro:pf.minmul} (``minimal'' polynomial multiplication)
is necessary. The main idea of its proof can be further developed
to Algorithm~\ref{alg:pf.minals} (minimizing ``polynomial'' admissible
linear systems).
Section~\ref{sec:pf.ncm} generalizes the concept
of companion matrices to provide immediately \emph{minimal}
linear representations for a broader class of elements
in the free associative algebra.
Section~\ref{sec:pf.ex} illustrates the concept step by
step.

\medskip
To our knowledge the literature on the factorization of elements in
free associative algebras is rather sparse.
\ifJOURNAL\else Caruso \fi \cite{Caruso2010a}
\ describes ideas of J.~Davenport using homogenization.
We have not yet investigated how their approach compares to
ours. Some special cases (for example \emph{variable disjoint}
factorizations) are treated in \cite{Arvind2015b}
.

Here we do not consider factorizations in skew polynomial rings
(or rather \emph{domains}), or ---more general--- rings satisfying
the Ore condition
\cite[Section~7.1]{Cohn2003b}
\ or
\cite[Section~0.8]{Cohn1985a}
.
A starting point in this context is
\cite{Heinle2013a}
\ or
\cite{Levandovskyy2018a}
. Factorizations of skew polynomials have various
connections to other areas, see for example 
\cite{Retakh2010a}
\ and 
\cite{Gelfand2001a}
, just to mention two.

\section{Representing Non-Commutative Polynomials}\label{sec:pf.faa}

There are much simpler ways for representing elements
in free associative algebras (in such a way that addition and multiplication
can be defined properly) than the following general presented here.
However, including the inverse we keep the full flexibility,
which could be used later to compute the left (or right)
greatest common divisor of two polynomials $p,q$ by minimizing
the linear representation for $p\inv q$ in which common left factors
would cancel.

\begin{notation}
The set of the natural numbers is denoted by $\numN = \{ 1,2,\ldots \}$,
that including zero by $\numN_0$.
Zero entries in matrices are usually replaced by (lower) dots
to stress the structure of the non-zero entries
unless they result from transformations where there
were possibly non-zero entries before.
We denote by $I_n$ the identity matrix of size $n$
respectively $I$ if the size is clear from the context.
\end{notation}

\smallskip
Let $\field{K}$ be a \emph{commutative} field,
$\aclo{\field{K}}$ its algebraic closure and
$X = \{ x_1, x_2, \ldots, x_d\}$ be a \emph{finite} alphabet.
$\freeALG{\field{K}}{X}$ denotes the \emph{free associative
algebra} or \emph{free $\field{K}$-algebra} 
(or ``algebra of non-commutative polynomials'')
and $\freeFLD{\field{K}}{X}$ denotes the \emph{universal field of
fractions} (or ``free field'') of $\freeALG{\field{K}}{X}$,
see 
\cite{Cohn1995a}
,
\cite{Cohn1999a}
. In our examples the alphabet is usually $X=\{x,y,z\}$.
Including the algebra of nc \emph{rational} series $\ncRATS{\field{K}}{X}$
we have the following chain of inclusions:
$\field{K}\subsetneq \freeALG{\field{K}}{X}
  \subsetneq \ncRATS{\field{K}}{X}
  \subsetneq \freeFLD{\field{K}}{X}$.

The \emph{free monoid}\index{free monoid} $X^*$ generated by $X$
is the set of all
\emph{finite words}\index{finite word}
$x_{i_1} x_{i_2} \cdots x_{i_n}$ with $i_k \in \{ 1,2,\ldots, d \}$.
An element of the alphabet is called \emph{letter}\index{letter},
an element of $X^*$ is called \emph{word}\index{word}.
The multiplication on $X^*$ is just the \emph{concatenation}\index{concatenation}
of words, that is,
$(x_{i_1} \cdots x_{i_m})\cdot (x_{j_1} \cdots x_{j_n})
= x_{i_1} \cdots x_{i_m} x_{j_1} \cdots x_{j_n}$,
with neutral element $1$, the \emph{empty word}.
The \emph{length} of a word $w=x_{i_1} x_{i_2} \cdots x_{i_m}$ is $m$,
denoted by $\length{w} = m$ or $\ell(w) = m$.
For detailled introductions see
\cite[Chapter~1]{Berstel2011a}
\ or
\cite[Section~I.1]{Salomaa1978a}
.

\begin{definition}[Inner Rank, Full Matrix, Hollow Matrix,
see
\cite{Cohn1985a}
, \cite{Cohn1999a}
]\label{def:pf.full}
\index{inner rank}\index{full matrix}\index{hollow matrix}%
Given a matrix $A \in \freeALG{\field{K}}{X}^{n \times n}$, the \emph{inner rank}
of $A$ is the smallest number $m\in \numN$
such that there exists a factorization
$A = T U$ with $T \in \freeALG{\field{K}}{X}^{n \times m}$ and
$U \in \freeALG{\field{K}}{X}^{m \times n}$.
The matrix $A$ is called \emph{full} if $m = n$,
\emph{non-full} otherwise.
It is called \emph{hollow} if it contains a zero submatrix of size
$k \times l$ with $k+l>n$.
\end{definition}

\begin{definition}[Associated Matrices,
\cite{Cohn1995a}
]\label{def:pf.ass}
\index{associated matrix}%
Two matrices $A$ and $B$ over $\freeALG{\field{K}}{X}$ (of the same size)
are called \emph{associated} over a subring $R\subseteq \freeALG{\field{K}}{X}$ 
if there exist (over $R$) invertible matrices $P,Q$ such that
$A = P B Q$.
\end{definition}

\begin{lemma}[%
\protect{\cite[Corollary~6.3.6]{Cohn1995a}
}]\label{lem:pf.cohn95.636}
A linear square matrix over $\freeALG{\field{K}}{X}$
which is not full is associated over $\field{K}$ to a linear
hollow matrix.
\end{lemma}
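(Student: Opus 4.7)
The plan is to exploit the $\numN_0$-grading of $R = \freeALG{\field{K}}{X}$ by total degree together with the fact that $R$ is a free ideal ring (fir), the combination of which is the source of the special behaviour of \emph{linear} non-full matrices.

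First, since $A$ is non-full, choose a rank factorization $A = BC$ with $B \in R^{n\times r}$, $C \in R^{r\times n}$, where $r < n$. Decompose by total degree, writing $B = B_0 + B_+$ and $C = C_0 + C_+$, with $B_0 = B|_{X=0} \in \field{K}^{n\times r}$ and $C_0 = C|_{X=0} \in \field{K}^{r\times n}$ while $B_+, C_+$ have entries in the augmentation ideal. Expanding
\[
A \;=\; B_0 C_0 + B_0 C_+ + B_+ C_0 + B_+ C_+
\]
and observing that every monomial appearing in $B_+ C_+$ has total degree $\geq 2$ while $A$ has degree $\leq 1$, we immediately deduce the crucial identity $B_+ C_+ = 0$.

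Next, the plan is to turn this vanishing product into a compatible splitting $\{1,\ldots,r\} = S \sqcup S'$. Because $R$ is a fir, the relation $B_+ C_+ = 0$ with entries in the augmentation ideal can be trivialised: there exists $W \in \mtxGL_r(R)$ so that the columns of $B_+ W$ indexed by $S'$ are zero and the rows of $W\inv C_+$ indexed by $S$ are zero, with $|S|+|S'| = r$. Combining this with the trivial factorization $A_0 = B_0 C_0$, the constant matrices $B_0 W$ and $W\inv C_0$ can by standard row/column reductions over $\field{K}$ be brought into echelon forms whose nonzero rows/columns fit inside the complementary patterns just obtained. From these aligned shapes one reads off invertible $P \in \mtxGL_n(\field{K})$ and $Q \in \mtxGL_n(\field{K})$ for which $PB$ has a set $I$ of zero rows and $CQ$ a set $J$ of zero columns with $|I| + |J| \geq n - r + (r - r) + (r - |S|) + |S| = \dots$ more cleanly, with $|I| + |J| > n$. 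Then $PAQ = (PB)(CQ)$ contains the resulting $|I|\times|J|$ zero block and is hollow, as desired.

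The main obstacle is the step just described: descending the a priori $R$-invertible $W$ to a transformation realised by \emph{constant} $P$ and $Q$. The bookkeeping of the degrees of $W$ is the technical heart of the argument; here one uses that the entries of $A$ lie in degrees $0$ and $1$ only, so that the weak algorithm in $\freeALG{\field{K}}{X}$ lets us reduce $W$ modulo higher-degree corrections without disturbing the identity $B_+ C_+ = 0$, until a representative $W \in \mtxGL_r(\field{K})$ remains. This is precisely where the linearity hypothesis, and with it the distinction between $R$-equivalence and $\field{K}$-equivalence demanded by the lemma, is bridged; without linearity one would in general only obtain a hollow form after equivalence over $R$, not over $\field{K}$.
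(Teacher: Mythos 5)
The paper itself gives no proof of this lemma—it is quoted directly from Cohn (Skew Fields, Corollary~6.3.6)—so your attempt can only be compared with the standard argument, whose overall shape you have correctly identified: decompose a rank factorization by degree, trivialize the top-degree relation, and finish with a rank count on the constant parts. The genuine gap is at your very first step. From $A=B_0C_0+B_0C_++B_+C_0+B_+C_+$ and $\deg A\le 1$ you cannot conclude $B_+C_+=0$: the degree-$\ge 2$ terms of $B_+C_+$ may cancel against degree-$\ge 2$ terms of $B_0C_++B_+C_0$, since nothing forces $B_+$ and $C_+$ to be homogeneous of degree one. Already the identity $1=1\cdot(1-x^2y)+x^2\cdot y$ exhibits a factorization of a (constant, hence linear) entry in which the product of the ``positive parts'' is $x^2y\neq 0$. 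The missing idea is the preliminary reduction of $A=BC$, by an invertible matrix over $\freeALG{\field{K}}{X}$ acting on the inner index $r$, to a factorization in which for each $i$ the degree of the $i$-th column of $B$ plus the degree of the $i$-th row of $C$ is at most $1$; this is exactly where the weak algorithm is genuinely used (Cohn's Lemma~6.3.5), and it is the step your argument silently assumes. Once that reduction is made, the relation $B_1C_1=0$ between the homogeneous linear parts is trivialized by plain linear algebra over $\field{K}$: writing $B_1=\sum_\ell B^{(\ell)}\otimes x_\ell$ and $C_1=\sum_m C^{(m)}\otimes x_m$, independence of the words $x_\ell x_m$ gives $B^{(\ell)}C^{(m)}=0$ for all $\ell,m$, and a basis of $\field{K}^r$ adapted to $\sum_m\operatorname{im}C^{(m)}\subseteq\bigcap_\ell\ker B^{(\ell)}$ splits the inner index set. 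So no fir-theoretic trivialization over $R$ followed by a descent of $W$ to $\field{K}$ is needed; the part you call ``the technical heart'' largely disappears once the factors are linear.

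A second, smaller slip is in the final step: you cannot arrange that $PB$ has zero \emph{rows}—that would force whole rows of $PAQ$ to vanish, which already fails for the hollow matrix displayed in the Remark following the lemma. What one actually arranges is that the block of \emph{constant} columns of $PB$ (say $r-s$ of them) vanishes in its last $n-(r-s)$ rows, and the block of constant rows of $CQ$ (the remaining $s$) vanishes in its last $n-s$ columns; the zero block of $PAQ$ then sits in the intersection of those row and column sets and has size $(n-r+s)\times(n-s)$, with $(n-r+s)+(n-s)=2n-r>n$, which is the hollowness you need.
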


\begin{remark}
A hollow square matrix cannot be full
\cite[Section~3.2]{Cohn1985a}
, illustrated in an example:
\begin{displaymath}
A =
\begin{bmatrix}
z & . & . \\
x & . & . \\
y & -x & 1 
\end{bmatrix}
=
\begin{bmatrix}
z & 0 \\
x & 0 \\
0 & 1
\end{bmatrix}
\begin{bmatrix}
1 & 0 & 0 \\
y & -x & 1
\end{bmatrix}.
\end{displaymath}
\end{remark}

\begin{definition}[Linear Representations, Dimension, Rank%
\ifJOURNAL
, \cite{Cohn1994a,Cohn1999a}
\else
\ \cite{Cohn1994a,Cohn1999a}
\fi]\label{def:pf.rep}
\index{linear representation}%
Let $f \in \freeFLD{\field{K}}{X}$.
A \emph{linear representation} of $f$ is a triple $\pi_f$ = $(u,A,v)$ with
$u \in \field{K}^{1 \times n}$,
full $A = A_0 \otimes 1 + A_1 \otimes x_1 + \ldots
  + A_d \otimes x_d$, $A_\ell \in \field{K}^{n\times n}$ and
$v \in \field{K}^{n\times 1}$ such that $f = u A\inv v$.
The \emph{dimension} of the representation is $\dim \pi_f = n$.
It is called \emph{minimal} if $A$ has the smallest possible dimension
among all linear representations of $f$.
A minimal one $\pi_f$ defines the \emph{rank}\index{rank} of $f$,
$\rank f = \dim \pi_f$.
The ``empty'' representation $\pi = (,,)$ is the minimal one
of $0 \in \freeFLD{\field{K}}{X}$ with $\dim \pi = 0$.
\end{definition}

\begin{definition}[Left and Right Families
\cite{Cohn1994a}
]\label{def:pf.family}
\index{left family}\index{right family}%
Let $\pi=(u,A,v)$ be a linear representation of $f \in \freeFLD{\field{K}}{X}$
of dimension $n$.
The families $( s_1, s_2, \ldots, s_n )\subseteq \freeFLD{\field{K}}{X}$
with $s_i = (A\inv v)_i$
and $( t_1, t_2, \ldots, t_n )\subseteq \freeFLD{\field{K}}{X}$
with $t_j = (u A\inv)_j$
are called \emph{left family} and \emph{right family} respectively.
$L(\pi) = \linsp \{ s_1, s_2, \ldots, s_n \}$ and
$R(\pi) = \linsp \{ t_1, t_2, \ldots, t_n \}$
denote their linear spans (over $\field{K}$).
\end{definition}

\begin{proposition}[%
\cite{Cohn1994a}
, Proposition 4.7]
A representation $\pi=(u,A,v)$ of an element $f \in \freeFLD{\field{K}}{X}$
is minimal if and only if both, the left family
and the right family are $\field{K}$-linearly independent.
\label{pro:pf.cohn94.47}
\end{proposition}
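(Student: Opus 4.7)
\emph{Plan.} I will prove both implications by contraposition, exploiting the action of $\mtxGL_n(\field{K}) \times \mtxGL_n(\field{K})$ on representations: the map $(u, A, v) \mapsto (u Q^{-1}, P A Q^{-1}, P v)$ preserves $f = u A\inv v$ and sends the left family $s = A\inv v$ to $Q s$ and the right family $t = u A\inv$ to $t P^{-1}$. Lemma~\ref{lem:pf.cohn95.636} will be the key structural input.

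\emph{Forward direction} ($\Rightarrow$). Assuming $\pi$ is minimal, I would argue that both families must be $\field{K}$-linearly independent by producing a smaller representation whenever one is not. Suppose the left family were dependent, with $c^\top s = 0$ for some $0 \ne c \in \field{K}^n$. I would take $Q \in \mtxGL_n(\field{K})$ with last row $c^\top$; the transformed representation $(u Q^{-1}, A Q^{-1}, v)$ then has left family $Q s$ with vanishing last entry, so the equation $(A Q^{-1})(Q s) = v$ reduces to $B \cdot (Q s)_{1:n-1} = v$ where $B := (A Q^{-1})_{:,1:n-1}$. The augmented matrix $[B \mid v] \in \freeALG{\field{K}}{X}^{n \times n}$ is linear with $v$ in the column span of $B$ over $\freeFLD{\field{K}}{X}$, hence not full; Lemma~\ref{lem:pf.cohn95.636} then provides a hollow associate over $\field{K}$, from which I would extract a $P \in \mtxGL_n(\field{K})$ whose last row annihilates both $B$ and $v$, chosen so that $(P A Q^{-1})_{n,n} \ne 0$. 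The first $n-1$ rows and columns of the transformed triple would then yield a representation of $f$ of dimension $n-1$, contradicting minimality. The right-family case is symmetric.

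\emph{Backward direction} ($\Leftarrow$). Assuming both families of $\pi$ are linearly independent and supposing $\pi_0 = (u_0, A_0, v_0)$ is a representation of $f$ of dimension $r < n$, I would derive a contradiction. The minimality of $\pi_0$ places $L(\pi_0) \subseteq L(\pi)$, so there is $M \in \field{K}^{r \times n}$ with $s_0 = M s$; combined with $u s = u_0 s_0 = u_0 M s$ and $\field{K}$-independence of $s$, this forces $u = u_0 M$. Symmetrically, there is $N \in \field{K}^{n \times r}$ with $t_0 = t N$ and $v = N v_0$. The linear $n \times n$ matrix $D := A - N A_0 M$ then satisfies
\[
  D s = A s - N A_0 (M s) = v - N A_0 s_0 = v - N v_0 = 0, \qquad t D = 0 \ \text{(dually).}
\]
Since $s \ne 0$, $D$ is not invertible over $\freeFLD{\field{K}}{X}$, hence not full; Lemma~\ref{lem:pf.cohn95.636} yields a hollow associate $\tilde D$ with a $k \times l$ zero block, $k + l > n$. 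In block form the equations $\tilde D \tilde s = 0$ and $\tilde t \tilde D = 0$ force either a length-$(n-l)$ piece of $\tilde s$ or a length-$(n-k)$ piece of $\tilde t$ to vanish, producing a $\field{K}$-linear dependency in $s$ or $t$ --- contradicting the assumed independence.

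\emph{Main obstacle.} The decisive step in both directions is converting the abstract hollow form of Lemma~\ref{lem:pf.cohn95.636} into a concrete $\field{K}$-linear dependency. In the forward direction one must ensure that the column operations implicit in the hollow decomposition can be arranged to leave the ``$v$ as last column'' structure of $[B \mid v]$ intact --- the bare lemma mixes the columns freely, so the decomposition has to be refined to respect the distinguished status of $v$. In the backward direction the placement of the hollow block splits the argument into cases (typically: either $D_{22}$ has full column rank, forcing zeros at the tail of $\tilde s$, or $D_{11}$ has full row rank, forcing zeros at the head of $\tilde t$), with degenerate configurations requiring a recursion on smaller blocks. A subsidiary point is the inclusion $L(\pi_0) \subseteq L(\pi)$ underpinning the definition of $M$; this follows from the intrinsic, representation-independent nature of the ``left shift space'' of $f$, but deserves a separate lemma in the generality of $\freeFLD{\field{K}}{X}$.
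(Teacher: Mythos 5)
The paper does not actually prove this proposition: it is quoted verbatim from Cohn--Reutenauer \cite{Cohn1994a}, so there is no in-paper argument to match yours against. The closest internal analogue is the proof of Lemma~\ref{lem:pf.rt1}, which fights exactly the battle you defer; measured against that, your sketch has genuine gaps at the decisive steps --- gaps you candidly label ``the main obstacle'' but do not close.

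Concretely, in the forward direction you want a $P\in\mtxGL_n(\field{K})$ whose last row annihilates $[B\mid v]$. Lemma~\ref{lem:pf.cohn95.636} only delivers \emph{some} $k\times l$ zero block with $k+l>n$ after arbitrary scalar row \emph{and} column operations; a non-full linear matrix need not have $\field{K}$-linearly dependent rows at all. The paper itself warns of this: in the remark following Lemma~\ref{lem:pf.rt1}, the matrix $A'_B$ with $B=[0,0,1]$ is already hollow, yet the only row relation has the non-scalar coefficient $z\inv$. In the proof of Lemma~\ref{lem:pf.rt1} the unwanted positions of the zero block are excluded \emph{using} the $\field{K}$-linear independence of the left family --- precisely the hypothesis you are negating in the forward direction --- so that mechanism is unavailable to you, and you would need a different one (presumably exploiting that $B$ is left full of inner rank $n-1$ because $A$ is full). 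In the backward direction, the inclusion $L(\pi_0)\subseteq L(\pi)$, i.e.\ the existence of the scalar matrix $M$ with $s_0=Ms$, is asserted rather than proved, and it is a statement of essentially the same depth as the proposition itself (the representation-independence of the left space is usually obtained \emph{from} the minimality criterion, not before it); the case analysis on where the hollow block of $D$ sits is likewise left open. The architecture is the right one and matches how such results are proved in the literature, but as written the argument consists of the easy reductions plus an accurate inventory of the hard parts.
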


\begin{definition}[Admissible Linear Systems%
\ifJOURNAL ,\fi\
\cite{Cohn1972a}
. Admissible Transformations]\label{def:pf.als}
\index{admissible linear system}\index{admissible transformation}%
A linear representation $\als{A} = (u,A,v)$ of $f \in \freeFLD{\field{K}}{X}$
is called \emph{admissible linear system} (for $f$),
denoted by $A s = v$,
if $u=e_1=[1,0,\ldots,0]$. The element $f$ is then the first component
of the (unique) solution vector $s$.
Given a linear representation $\als{A} = (u,A,v)$
of dimension $n$ of $f \in \freeFLD{\field{K}}{X}$
and invertible matrices $P,Q \in \field{K}^{n\times n}$,
the transformed $P\als{A}Q = (uQ, PAQ, Pv)$ is
again a linear representation (of $f$).
If $\als{A}$ is an ALS,
the transformation $(P,Q)$ is called
\emph{admissible} if the first row of $Q$ is $e_1 = [1,0,\ldots,0]$.
\end{definition}

Transformations can be done by elementary row- and column operations,
explained in detail in
\cite[Remark~1.12]{Schrempf2017a}
. For further remarks and connections to the related concepts
of linearization and realization see
\cite[Section~1]{Schrempf2017a}
. 
For rational operations on ALS level see the following proposition.
If an ALS is \emph{minimal} then more refined versions of an
inverse give a minimal ALS again. For a detailled discussion
we refer to 
\cite[Section~4]{Schrempf2017a}
.

\begin{proposition}[Rational Operations%
\ifJOURNAL ,\fi\
\cite{Cohn1999a}
]\label{pro:pf.ratop}
Let $0\neq f,g \in \freeFLD{\field{K}}{X}$ be given by the
admissible linear systems $\als{A}_f = (u_f, A_f, v_f)$
and $\als{A}_g = (u_g, A_g, v_g)$ respectively
and let $0\neq \mu \in \field{K}$.
Then admissible linear systems for the rational operations
can be obtained as follows:

\smallskip\noindent
The scalar multiplication
$\mu f$ is given by
\begin{displaymath}
\mu \als{A}_f =
\bigl( u_f, A_f, \mu v_f \bigr).
\end{displaymath}
The sum $f + g$ is given by
\begin{displaymath}
\als{A}_f + \als{A}_g =
\left(
\begin{bmatrix}
u_f & . 
\end{bmatrix},
\begin{bmatrix}
A_f & -A_f u_f\trp u_g \\
. & A_g
\end{bmatrix}, 
\begin{bmatrix} v_f \\ v_g \end{bmatrix}
\right).
\end{displaymath}
The product $fg$ is given by
\begin{displaymath}
\als{A}_f \cdot \als{A}_g =
\left(
\begin{bmatrix}
u_f & . 
\end{bmatrix},
\begin{bmatrix}
A_f & -v_f u_g \\
. & A_g
\end{bmatrix},
\begin{bmatrix}
. \\ v_g
\end{bmatrix}
\right).
\end{displaymath}
And the inverse $f\inv$ is given by
\begin{displaymath}
\als{A}_f\inv =
\left(
\begin{bmatrix}
1 & . 
\end{bmatrix},
\begin{bmatrix}
-v_f & A_f \\
. & u_f
\end{bmatrix},
\begin{bmatrix}
. \\ 1
\end{bmatrix}
\right).
\end{displaymath}
\end{proposition}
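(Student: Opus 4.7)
The plan is to verify each of the four constructions separately by (i) checking that the displayed triple has the prescribed admissible shape, (ii) computing the resulting element of the free field explicitly via block inversion, and (iii) arguing fullness of the new system matrix so that the triple is indeed a linear representation in the sense of Definition~\ref{def:pf.rep}. The admissibility is visible by inspection in every case: in $\mu\als{A}_f$, in $\als{A}_f+\als{A}_g$ and in $\als{A}_f\cdot\als{A}_g$ the first row of the new $u$ equals $e_1$ because $u_f = e_1$, and for the inverse the first row of the new $u$ is the freshly placed $e_1 = [1,.]$. The scalar multiplication case is immediate since $u_f A_f\inv (\mu v_f) = \mu f$ and fullness of $A_f$ is inherited.

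The sum and product cases both reduce to block-triangular inversion. For a block upper triangular matrix $M = \bigl[\begin{smallmatrix} A_f & C \\ . & A_g \end{smallmatrix}\bigr]$, one has $M\inv = \bigl[\begin{smallmatrix} A_f\inv & -A_f\inv C A_g\inv \\ . & A_g\inv \end{smallmatrix}\bigr]$. Substituting $C = -A_f u_f\trp u_g$ and applying $[u_f,.]$ on the left and $[v_f\trp,v_g\trp]\trp$ on the right yields $u_f A_f\inv v_f + u_f u_f\trp u_g A_g\inv v_g = f + g$, where I use that $u_f u_f\trp = e_1 e_1\trp = 1$; for $C = -v_f u_g$ and right-hand side $[.,v_g\trp]\trp$ the product collapses to $(u_f A_f\inv v_f)(u_g A_g\inv v_g) = fg$.

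For the inverse, label the new system matrix as having row-blocks of sizes $n_f$ and $1$ and column-blocks of sizes $1$ and $n_f$. Solving $M M\inv = I$ block-wise, the top-right scalar entry $\beta$ of $M\inv$ satisfies $-v_f \beta + A_f \delta = .$ and $u_f \delta = 1$, hence $\delta = A_f\inv v_f \beta$ and $u_f A_f\inv v_f \beta = 1$, i.e.\ $\beta = f\inv$. Then $[1,.]\,M\inv\,[.,1]\trp = \beta = f\inv$, as claimed. Note that this construction requires $f \neq 0$, which is assumed.

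The main obstacle, and the only non-computational point, is fullness of the four constructed system matrices. For the scalar, sum, and product cases the new matrix is block upper triangular with diagonal blocks $A_f$, $A_g$ (or just $A_f$), so by a standard application of Lemma~\ref{lem:pf.cohn95.636} any non-full linear block-triangular matrix would be associated over $\field{K}$ to a hollow one, which would force one of the diagonal blocks $A_f, A_g$ itself to be non-full, contradicting the hypothesis that $\als{A}_f, \als{A}_g$ are linear representations. For the inverse the argument is similar: a non-full factorization of $\bigl[\begin{smallmatrix} -v_f & A_f \\ . & u_f\end{smallmatrix}\bigr]$ would, via the hollow-matrix reduction, produce a non-full factorization of $A_f$ (since $u_f = e_1$ and $v_f$ are constant vectors), again a contradiction. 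Alternatively, one may use that $f \in \freeFLD{\field{K}}{X}$ is invertible together with the block-inverse computation above to exhibit explicit entries of $M\inv$, which shows $M$ has a two-sided inverse over the free field and hence is full over $\freeALG{\field{K}}{X}$.
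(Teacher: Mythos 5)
The paper itself gives no proof of Proposition~\ref{pro:pf.ratop}; it is quoted from the literature (Cohn 1999), so there is nothing internal to compare against. Your direct verification is essentially correct and is the standard one: block upper-triangular inversion gives $uA\inv v = f+g$ (using $u_f u_f\trp = 1$ since $u_f = e_1$) and $uA\inv v = fg$, and solving for the last column of the inverse of $\bigl[\begin{smallmatrix} -v_f & A_f \\ . & u_f \end{smallmatrix}\bigr]$ gives $f\inv$ in the $(1,n_f+1)$ position; admissibility and linearity of the new pencils are visible by inspection. One caution on the fullness step: for the inverse, the system matrix is \emph{not} block triangular with square full diagonal blocks (the ``diagonal'' blocks in your labelling are $-v_f$ of size $n_f\times 1$ and $u_f$ of size $1\times n_f$), so the reduction ``non-full $\Rightarrow$ hollow $\Rightarrow$ $A_f$ non-full'' does not go through as stated. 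The alternative you mention is the right argument and should be promoted to the main one in all four cases: a linear square matrix over $\freeALG{\field{K}}{X}$ is full if and only if it is invertible over $\freeFLD{\field{K}}{X}$, and your explicit block inverses (using $f\neq 0$, $g \neq 0$ where needed) exhibit exactly such inverses.
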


\begin{definition}\label{def:pf.reg}
An element in $\freeFLD{\field{K}}{X}$ is called \emph{regular},
if it has a linear representation $(u,A,v)$ with $A = I - M$,
where $M = M_1 \otimes x_1 + \ldots + M_d \otimes x_d$
with $M_i \in \field{K}^{n \times n}$ for some $n\in \numN$,
that is, $A_0 = I$ in Definition~\ref{def:pf.rep},
or equivalently, if $A_0$ is regular (invertible).
\end{definition}

\begin{remark}
$A = I - M$ is also called a \emph{monic pencil}\index{monic pencil}%
\ifJOURNAL ,\fi\
\cite{Helton2007b}
. A regular element can also be represented by a
\emph{proper linear system} $s = v + M s$
\cite[Section~II.1]{Salomaa1978a}
.
\end{remark}

\begin{remark}
For a polynomial $p \in \freeALG{\field{K}}{X}
\subseteq \ncRATS{\field{K}}{X} \subseteq \freeFLD{\field{K}}{X}$
the rank of $p$ is just the \emph{Hankel rank}\index{Hankel rank},
that is, the rank of its Hankel matrix\index{Hankel matrix}
$\mathcal{H}(p) = (h_{w_1, w_2})$ ---rows and columns are
indexed by words in the free monoid---
where $h_{w_1, w_2} \in \field{K}$ is the coefficient of the monomial
$w = w_1 w_2$ of $p$. See 
\cite{Fliess1974a}
\ and
\cite[Section~II.3]{Salomaa1978a}
.
\end{remark}

\begin{example}
The Hankel matrix for $p = x(1-yx) = x- xyx$, with
row indices $[1, x, xy, xyx]$ and column indices $[1, x, yx, xyx]$,
is ---without zero rows $\{y,x^2,yx,y^2,$ $x^3,x^2 y, y x^2, \ldots\}$
and columns $\{y,x^2,xy,y^2,x^3,x^2 y, y x^2, \ldots\}$---
\begin{displaymath}
\mathcal{H}(p) =
\begin{bmatrix}
. & 1 & . & -1 \\
1 & . & -1 & . \\
. & -1 & . & . \\
-1 & . & . & . \\
\end{bmatrix}.
\end{displaymath}
Its rank is $4$. Thus $\rank p =4$ and therefore the dimension
of any \emph{minimal} admissible linear system 
is~4, as will be seen later
in the ALS~\eqref{eqn:pf.minmul.1}
in Example~\ref{ex:pf.minmul},
where we show minimality by
Proposition~\ref{pro:pf.cohn94.47}.
\end{example}

\medskip
The following definitions follow mainly 
\cite{Baeth2015a}
\ and are streamlined to our purpose.
We do not need the full generality here.
While there is a rather uniform factorization theory
in the commutative setting
\cite[Section~1.1]{Geroldinger2006a}
, even the ``simplest'' non-commutative case, that is,
a ``unique'' \emph{factorization domain} like the
\emph{free associative algebra}, is not straightforward.
For a general (algebraic) point of view we recommend the survey
\cite{Smertnig2015a}
. Factorization in \emph{free ideal rings} (FIRs)
is discussed in detail in 
\cite[Chapter~3]{Cohn1985a}
. FIRs play an important role in the construction of free fields.
More on ``non-commutative'' factorization can
be found in
\cite{Jordan1989a}
\ and
\cite{Bell2017a}
\ (just to mention a few)
and the literature therein.

\begin{definition}[Similar Right Ideals, Similar Elements
\protect{\cite[Section~3.2]{Cohn1985a}
}]
\index{similar right ideals}%
Let $R$ be a ring. 
Two right ideals $\ideal{a},\ideal{b} \subseteq R$ are called
\emph{similar}, written as $\ideal{a} \sim \ideal{b}$,
if $R/\ideal{a} \cong R/\ideal{b}$ as right $R$-modules.
\index{similar elements}%
Two elements $p,q\in R$ are called \emph{similar} if
their right ideals $pR$ and $qR$ are similar,
that is, $pR \sim qR$.
See also 
\cite[Section~4.1]{Smertnig2015a}
.
\end{definition}

\begin{definition}[Left and Right Coprime Elements
\protect{\cite[Section~2]{Baeth2015a}
}]\label{def:pf.lrcop}
Let $R$ be a domain and $H = R^\bullet = R \setminus \{ 0 \}$.
An element $p \in H$ \emph{left divides}\index{left divides} $q \in H$,
written as $p \ldivs q$, if $q \in pH = \{ ph \mid h \in H \}$.
Two elements $p,q$ are called \emph{left coprime}\index{left coprime}
if for all $h$ such that $h\ldivs p$ and $h\ldivs q$
implies $h \in H^\times = \{ f \in H \mid f \text{ is invertible} \}$,
that is, $h$ is an element of the \emph{group of units}\index{unit group}.
Right division $p \rdivs q$ and the notion of
\emph{right coprime} is defined in a similar way.
Two elements are called \emph{coprime}\index{coprime}
if they are left and right coprime.
\end{definition}

\begin{definition}[Atomic Domains, Irreducible Elements
\protect{\cite[Section~2]{Baeth2015a}
}]\label{def:pf.atoms}
Let $R$ be a domain and $H = R^\bullet$.
An element $p\in H \setminus H^\times$, that is,
a non-zero non-unit (in $R$), is called an \emph{atom}\index{atom}
(or \emph{irreducible}\index{irreducible element})
if $p = q_1 q_2$ with $q_1,q_2 \in H$ implies that
either $q_1 \in H^\times$ or $q_2 \in H^\times$.
The set of atoms in $R$ is denoted by
$\atoms(R)$.
The (cancellative) monoid $H$ is called \emph{atomic}\index{atomic domain}
if every non-unit can be written as a finite product of atoms of $H$.
The domain $R$ is called \emph{atomic} if the monoid $R^\bullet$ is atomic.
\end{definition}

\begin{remark}
Similarity of two elements $a,a'$ in a \emph{weak Bezout ring} $R$
is equivalent to the existence of $b,b' \in R$ such that
$ab' = ba'$ with $ab'$ and $ba'$ coprime,
that is, $a$ and $b$ are left coprime and
$b'$ and $a'$ are right coprime.
The free associative algebra $\freeALG{\field{K}}{X}$ is a weak Bezout ring
\cite[Proposition~5.3 and Theorem~6.2]{Cohn1963b}
.
\end{remark}

\begin{example}
The polynomials $p = (1-xy)$ and $q=(1-yx)$ are similar,
because $p x = (1-xy) x = x - xyx = x(1-yx) = x q$.
See also Example~\ref{ex:pf.minmul}.
\end{example}

\begin{example}
In the \emph{free monoid} $X^*$ the atoms are just the letters
$x_i \in X$.
\end{example}

\begin{definition}[Similarity Unique Factorization Domains
\protect{\cite[Definition~4.1]{Smertnig2015a}
}]\label{def:pf.sfd}
\index{similarity factorization domain}\index{similarity-UFD}%
A domain $R$ is called \emph{similarity factorial}
(or a \emph{similarity-UFD}) if
$R$ is atomic and it satisfies the property that
if $p_1 p_2 \cdots p_m = q_1 q_2 \cdots q_n$ for atoms\index{atom}
(irreducible elements)\index{irreducible element}
$p_i,q_j \in R$,
then $m=n$ and there exists a permutation $\sigma \in \mathfrak{S}_m$
such that $p_i$ is similar to $q_{\sigma(i)}$ for all $i\in 1,2, \ldots, m$.
\end{definition}

\begin{proposition}[%
\protect{\cite[Theorem~6.3]{Cohn1963b}
}]\label{pro:pf.cohn63b}
The free associative algebra $\freeALG{\field{K}}{X}$
is a similarity (unique) factorization domain.
\end{proposition}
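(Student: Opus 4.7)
The plan is to split the proof into two parts: existence of atomic factorizations (atomicity) and uniqueness up to similarity and permutation. Both parts rely on the weak Bezout structure of $\freeALG{\field{K}}{X}$ recalled in the remark preceding the statement, together with the natural degree function on the free associative algebra.

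For atomicity I would first identify the units. Since $\freeALG{\field{K}}{X}$ is a graded domain in which $\deg(pq) = \deg p + \deg q$, a product of two elements of positive degree cannot be a scalar, so $\freeALG{\field{K}}{X}^{\times} = \field{K}^{\times}$. Given a non-zero non-unit $p$, either $p$ is already an atom or it admits a non-trivial factorization $p = qr$ with $\deg q, \deg r \geq 1$; each factor then has strictly smaller degree than $p$, and induction on degree produces a decomposition of $p$ into atoms.

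For uniqueness I would take two atomic factorizations $p_1 p_2 \cdots p_m = q_1 q_2 \cdots q_n$ and induct on $m+n$. The crucial step is to locate an index $j$ such that $p_1$ is similar to $q_j$ and, after replacing $q_1 q_2 \cdots q_n$ by a modified factorization of the same length into atoms, $p_1$ can be cancelled on the left. Using the characterization of similarity via comaximal relations in a weak Bezout ring stated in the remark, one can write $p_1 q_1' = q_1 p_1'$ with $p_1, q_1$ left coprime and $p_1', q_1'$ right coprime, so that $p_1 \sim p_1'$ and $q_1 \sim q_1'$. Iterating this ``commutation'' pushes $p_1$ past $q_1, q_2, \ldots$ until it is matched (up to similarity) with some $q_j$; cancelling $p_1$ then leaves two shorter factorizations to which the inductive hypothesis applies, supplying the required permutation and similarities on the remaining atoms. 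In particular one concludes $m = n$.

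The main obstacle is making this commutation argument rigorous, i.e.\ proving that $p_1$ eventually becomes similar to some $q_j$ rather than surviving past all of them while remaining a non-unit atom. This is the non-commutative analogue of the step ``irreducible implies prime'' in a commutative UFD, and it is where the full strength of the structure of $\freeALG{\field{K}}{X}$ as a free ideal ring is actually used. Following Cohn, I would translate the question into one about the chain of principal right ideals $R \supset q_1 R \supset q_1 q_2 R \supset \cdots \supset q_1 q_2 \cdots q_n R$ and apply a Schreier-style refinement together with a Jordan--Hölder theorem for the simple subfactors $q_i q_{i+1} \cdots q_n R / q_{i-1} q_i \cdots q_n R \cong R / q_i R$, whose isomorphism classes are precisely the similarity classes of the $q_j$ and therefore match the similarity classes of the $p_i$ obtained from the analogous chain on the other side.
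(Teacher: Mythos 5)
The paper offers no proof of this proposition at all: it is imported verbatim as Theorem~6.3 of \cite{Cohn1963b}, so there is no in-paper argument to measure your attempt against. Judged on its own, your sketch follows Cohn's original route: atomicity from additivity of the degree (which also correctly identifies the units as $\field{K}^\times$), and uniqueness from a Jordan--H\"older argument attached to the chain of principal right ideals $R\supset q_1R\supset q_1q_2R\supset\cdots\supset pR$. The first half is complete as stated.

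In the second half there is one genuine gap and two slips. The gap: the ``commutation'' argument you open with (pushing $p_1$ past the $q_j$ via comaximal relations) is, as you concede, not obviously terminating, and it is not the engine of Cohn's proof; the engine is the fact that in a 2-fir (equivalently, a weak Bezout ring) the sum and intersection of two principal right ideals with non-zero intersection are again principal, so that the principal right ideals between $pR$ and $R$ form a \emph{modular} sublattice of the lattice of all right ideals. Only then does the Schreier/Jordan--H\"older theorem for modular lattices apply, giving equal lengths and pairwise projective intervals, and projectivity of intervals translates into isomorphism of the quotient modules, i.e.\ similarity of the corresponding atoms. That modularity statement is exactly what the weak algorithm for $\freeALG{\field{K}}{X}$ buys, and your sketch assumes it rather than proves it. The slips: your subfactor has its indices reversed --- the correct isomorphism is $q_1\cdots q_{i-1}R/q_1\cdots q_iR\cong R/q_iR$, induced by left multiplication by $q_1\cdots q_{i-1}$ --- and these quotients are in general \emph{not} simple as right $R$-modules (already $R/xR$ fails for $X=\{x,y\}$); what is true, and what the argument needs, is that $[q_1\cdots q_iR,\,q_1\cdots q_{i-1}R]$ is a minimal interval in the lattice of \emph{principal} right ideals, which holds precisely because $q_i$ is an atom and, in a 2-fir, $aR\supseteq bR$ if and only if $a$ is a left factor of $b$.
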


\section{Factorizing non-commutative Polynomials}\label{sec:pf.fnp}

Our concept for the factorization of nc polynomials
(Theorem~\ref{thr:pf.factorization})
relies on \emph{minimal} linear representations.
Beside the ``classical'' algorithms from 
\cite{Cardon1980a}
\ and 
\cite{Fornasini1980a}
, there is a naive one
illustrated in Section~\ref{sec:pf.ex}.
The latter is not very efficient in general for an
alphabet with more than one letter
but it preserves the form defined in the following.
After formulating a \emph{minimal polynomial multiplication}
in Proposition~\ref{pro:pf.minmul}
and illustrating it in an example
we present an algorithm which works directly
on the system matrix (of the admissible linear system).

Using \emph{proper linear systems} 
\cite[Section~II.1]{Salomaa1978a}
\ would be slightly too restrictive
because the only possible admissible transformations are
conjugations (with respect to the system matrix).
That these are not sufficient can be seen in
Example~\ref{ex:pf.irred}.
On the other hand, \emph{admissible linear systems} are too general.
Therefore we define a form that is suitable for our purpose.

\begin{remark}
Although we use admissible linear systems,
restricting the application of the rational operations 
(excluding the inverse) in Proposition~\ref{pro:pf.ratop}
to (systems for) polynomials only, one gets again polynomials.
If the inverse is restricted to
(systems for) rational formal power series
with \emph{non-vanishing} constant coefficient
one gets again rational formal power series.
Since we are using multiplication only,
Theorem~\ref{thr:pf.factorization} does \emph{not}
rely on the (construction of the) free field.
\end{remark}

\begin{definition}[Pre-Standard ALS, Pre-Standard Admissible Transformation]%
\label{def:pf.psals}
An admissible linear system $\als{A} = (u,A,v)$
of dimension $n$
with system matrix $A = (a_{ij})$
for a non-zero polynomial $0 \neq p \in \freeALG{\field{K}}{X}$ 
is called
\emph{pre-standard}\index{pre-standard ALS}, if
\begin{itemize}
\item[(1)] $v = [0,\ldots,0,\lambda]\trp$ for some $\lambda \in\field{K}$ and
\item[(2)] $a_{ii}=1$ for $i=1,2,\ldots, n$ and $a_{ij}=0$ for $i>j$,
  that is, $A$ is upper triangular.
\end{itemize}
A pre-standard ALS is also written as $\als{A} = (1,A,\lambda)$
with $1,\lambda \in \field{K}$.
An admissible transformation $(P,Q)$ for an ALS $\als{A}$
is called \emph{pre-standard}\index{pre-standard admissible transformation},
if the transformed system $P\als{A} Q$ is (again) pre-standard.
\end{definition}

\subsection{Minimal Multiplication}\label{sec:pf.fnp.mm}

To be able to prove that the construction
in Proposition~\ref{pro:pf.minmul} leads to a \emph{minimal}
linear representation (for the product of two nc polynomials)
some preparation is necessary.
One of the main tools is Lemma~\ref{lem:pf.cohn95.636}
\cite[Corollary~6.3.6]{Cohn1995a}
.
Although we are working with regular elements only,
invertibility of the constant coefficient matrix $A_0$
(in the system matrix) does not have to be assumed in
Lemma~\ref{lem:pf.rt1}.

\begin{lemma}\label{lem:pf.rt1}
Let $\als{A} = (u,A,v)$ be an ALS
of dimension $n \ge 1$ with $\field{K}$-linearly independent 
left family $s = A\inv v$ and
$B = B_0 \otimes 1 + B_1 \otimes x_1 + \ldots + B_d \otimes x_d$
with $B_\ell\in \field{K}^{m\times n}$, such that
$B s = 0$. Then there exists a (unique) $T \in \field{K}^{m \times n}$
such that $B = TA$.
\end{lemma}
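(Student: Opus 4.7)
The plan is to prove existence of $T$ in two steps: first in the regular subcase where $A_0$ is invertible, via a power series expansion of $s$; second in the general case, by reducing to the regular one through an affine substitution of the variables. Uniqueness is immediate because $A$ is full (hence invertible over $\freeFLD{\field{K}}{X}$), so any $T_1, T_2$ with $T_1 A = T_2 A$ must satisfy $T_1 = T_2$ after right-multiplication by $A\inv$.

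For the regular subcase I would set $\beta_k := A_0\inv A_k$ for $k = 1, \ldots, d$ and $u_0 := A_0\inv v$, and expand
\[
  s \;=\; A\inv v \;=\; \sum_{\ell\ge 0}(-1)^\ell \sum_{|w|=\ell}(\beta_{w_1}\cdots\beta_{w_\ell} u_0)\,x_w
\]
as a formal power series in $\ncPOWS{\field{K}}{X}^n$. A coefficient-by-coefficient check shows that the $\field{K}$-linear independence of $s_1, \ldots, s_n$ is equivalent to the Kalman-style reachability condition that the vectors $\{\beta_{w_1}\cdots\beta_{w_\ell} u_0 : \ell \ge 0,\, w \in \{1,\ldots,d\}^\ell\}$ span $\field{K}^n$. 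Substituting the expansion into $Bs = 0$ and equating coefficients gives $B_0 u_0 = 0$ at degree zero and, for each word $w = w_1 \cdots w_\ell$ of length $\ell \ge 1$, the identity $(B_{w_1} - B_0 \beta_{w_1})\,\beta_{w_2}\cdots\beta_{w_\ell}\,u_0 = 0$ (with the empty product interpreted as $I$). Setting $T := B_0 A_0\inv$, so that $B_0 \beta_k = TA_k$, this becomes $(B_k - TA_k)\,z = 0$ for every $k\ge 1$ and every $z$ in the spanning reachable set, which forces $B_k = TA_k$. Combined with the tautological $TA_0 = B_0$ this yields $B = TA$, and the degree-$0$ identity reads $Tv = 0$.

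To remove the assumption that $A_0$ is invertible I would perform an affine substitution $x_k \mapsto x_k + c_k$ with $c \in \field{K}^d$ chosen so that the new constant term $A_0 + \sum_k c_k A_k$ is invertible. Such a $c$ exists once $\field{K}$ is large enough, and otherwise after passing to a finite extension; one then descends, because $TA = B$ is a $\field{K}$-linear system in the $mn$ entries of $T$. The substitution is a $\field{K}$-algebra automorphism extending to $\freeFLD{\field{K}}{X}$, and it preserves fullness of the system matrix, $\field{K}$-linear independence of the left family, and the identity $Bs = 0$; applying the regular subcase to the substituted system produces a constant $T$, and matching coefficients of $x_k$ and of the constant term yields $TA = B$ in the original variables as well.

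The hard part of the argument, and the only step that really uses global structure of the free field, is this reduction: one needs to know that a full linear matrix admits an invertible scalar specialization (after possibly enlarging $\field{K}$), which is where the identification of fullness of a linear matrix over $\freeALG{\field{K}}{X}$ with its invertibility in $\freeFLD{\field{K}}{X}$ enters substantively.
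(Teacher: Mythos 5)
Your uniqueness argument and your regular subcase are both correct: the power-series expansion of $s$, the identification of $\field{K}$-linear independence of the left family with the condition that $\{\beta_{w_1}\cdots\beta_{w_\ell}u_0\}$ spans $\field{K}^n$, and the coefficient comparison forcing $B_k=TA_k$ for $T=B_0A_0\inv$ are the classical minimality/reachability argument for recognizable series, and they work. The gap is the reduction step, which you yourself flag as the hard part: it is \emph{false} that a full linear matrix over $\freeALG{\field{K}}{X}$ admits an invertible scalar specialization, even after passing to $\aclo{\field{K}}$. The standard counterexample is
\begin{displaymath}
A=\begin{bmatrix} 0 & x & y \\ -x & 0 & z \\ -y & -z & 0 \end{bmatrix},
\end{displaymath}
which is full (inverting the upper left $2\times 2$ block, the Schur complement over the free field is $zx\inv y-yx\inv z\neq 0$), while every scalar evaluation $\bigl[\begin{smallmatrix} 0 & a & b \\ -a & 0 & c \\ -b & -c & 0\end{smallmatrix}\bigr]$ annihilates $(c,-b,a)\trp$ and is therefore singular over \emph{every} commutative extension of $\field{K}$. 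Fullness of a linear matrix corresponds to invertibility under generic \emph{matrix} substitutions of sufficiently large size, not under scalar ones. This example also lies within the scope of the lemma: taking $v=[0,0,1]\trp$ one gets $s_3=(zx\inv y-yx\inv z)\inv$ and $s_1,s_2$ equal to $\pm x\inv z s_3,\;\mp x\inv y s_3$, and a $\field{K}$-dependence of $(s_1,s_2,s_3)$ would, after cancelling $s_3$ and multiplying by $x$, give a dependence of $x,y,z$; so the hypotheses hold but no affine substitution $x_k\mapsto x_k+c_k$ regularizes the system, and your argument does not reach it. (The descent from a finite extension back to $\field{K}$ is fine, since $TA=B$ is a linear system; that is not where the problem lies.)

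The paper proves the general case by a different device that avoids regularity altogether: after reducing to $m=1$ and $v=[0,\ldots,0,1]\trp$, it replaces the last row of $A$ by $B$, observes that the resulting linear matrix $A'_B$ is not full because $s\in\ker A'_B$, and invokes Lemma~\ref{lem:pf.cohn95.636} to associate $A'_B$ over $\field{K}$ to a hollow matrix; $\field{K}$-linear independence of the left family is then used to rule out every hollow form except the one whose zero row is the replaced last row, and reading off that row transformation yields the scalar $T$ with $B=TA$. To repair your proof you would either have to restrict the lemma to regular systems (which would in fact suffice for the pre-standard systems used later in the paper, where $A_0$ is unitriangular, but not for the lemma as stated --- the paper explicitly declines to assume $A_0$ invertible) or replace the scalar substitution by an argument of Cohn's associated-hollow-matrix type in the non-regular case.
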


\begin{proof}
The trivial case $n=1$ implies $B=0$ and therefore $T=0$.
Now let $n \ge 2$.
Without loss of generality assume that $v = [0,\ldots,0,1]\trp$
and $m=1$.
Since $A$ is full and thus invertible over the free field,
there exists a unique $T$ such that $B = TA$, namely
$T = B A\inv$ in $\freeFLD{\field{K}}{X}^{1 \times n}$.
The last column in $T$ is zero because
$0 = Bs = BA\inv v = Tv$.
Now let $A'$ denote the matrix $A$ whose last row is removed
and $A'_B$ the matrix obtained from $A$ when the last row is replaced by
$B$. $A'_B$ cannot be full since $s \in \ker A'_B$
would give a contradiction: $s = (A'_B)\inv 0 = 0$.

We claim that there is only one possibility to transform $A'_B$
to a hollow matrix, namely with zero last row. If we cannot produce a
$(n-i) \times i$ block of zeros (by invertible transformations) in the
first $n-1$ rows of $A'_B$, then we cannot get 
blocks of zeros of size $(n-i+1) \times i$ and we are done.

Now assume to the contrary that there are invertible matrices
$P' \in \field{K}^{(n-1) \times (n-1)}$
and (admissible) $Q\in\field{K}^{n \times n}$
with $(Q\inv s)_1 = s_1$,
such that $P'A'Q$ contains a zero block of size $(n-i) \times i$
for some $i=1,\ldots, n-1$. There are two cases. If the first $n-i$
entries in column~1 cannot be made zero, we construct an upper right
zero block:
\begin{displaymath}
\hat{A} = 
\begin{bmatrix}
A_{11} & . \\
A_{21} & A_{22}
\end{bmatrix},
\quad \hat{s} = Q\inv s
\quad\text{and}\quad \hat{v} = P v = v
\end{displaymath}
where $A_{11}$ has size $(n-i) \times (n-i)$.
If $A_{11}$ \emph{were not} full,
then $A$ would not be full (the last row is not involved
in the transformation). Hence this pivot block is
invertible over the free field. 
Therefore $\hat{s}_1 = \hat{s}_2 = \ldots = \hat{s}_{n-i} = 0$.
Otherwise we construct an upper left zero block in $PAQ$.
But then 
$\hat{s}_{i+1} = \hat{s}_{i+2} = \ldots = \hat{s}_{n} = 0$.
Both contradict $\field{K}$-linear independence of the left family.

Hence, by Lemma~\ref{lem:pf.cohn95.636},
$A'_B$ is associated over $\field{K}$
to a linear hollow matrix with a $1 \times n$ block of zeros,
say in the last row (columns are left untouched):
\begin{displaymath}
\begin{bmatrix}
I_{n-1} & . \\
T' & 1
\end{bmatrix}
\begin{bmatrix}
A' \\
B
\end{bmatrix}
I_n
=
\begin{bmatrix}
A' \\
.
\end{bmatrix}.
\end{displaymath}
The matrix $T = [-T', 0] \in \field{K}^{1 \times n}$ satisfies
$B = TA$.
\end{proof}

\begin{remark}
Although the ALS in Lemma~\ref{lem:pf.rt1}
does not have to be minimal,
$\field{K}$-linear independence of the left family
is an important assumption for two reasons.
One is connected to ``pathological'' situations,
compare with 
\cite[Example~2.5]{Schrempf2017a}
.
An entry corresponding
to some $s_j=0$, say for $j=3$, could be arbitrary:
\begin{displaymath}
\begin{bmatrix}
1 & -x & . \\
. & . & z \\
. & 1 & -1
\end{bmatrix}
s
= 
\begin{bmatrix}
. \\ . \\ 1
\end{bmatrix}.
\end{displaymath}
For $B = [2, -2x, y]$ the transformation $T$ has non-scalar entries: $T = [2, y z\inv, 0]$.
The other reason concerns the exclusion of other possibilities for non-fullness
except the last row. For $B= [0, 0, 1]$ the matrix
\begin{displaymath}
A'_B = 
\begin{bmatrix}
1 & -x & . \\
. & . & z \\
. & . & 1
\end{bmatrix}
\end{displaymath}
is hollow. 
However, the
transformation we are looking for here is $T = [0, z\inv, 0]$.
\end{remark}

\begin{lemma}\label{lem:pf.min1}
Let $\als{A} = (u,A,v)$ be
a pre-standard ALS of dimension $n\ge 2$
with $\field{K}$-linearly dependent left family $s=A\inv v$.
Let $A = (a_{ij})$.
Let $m \in \{ 2, 3, \ldots, n \}$ be the minimal index
such that the left subfamily $\underline{s} = (A\inv v)_{i=m}^n$
is $\field{K}$-linearly independent.
Then there exist matrices $T,U \in \field{K}^{1 \times (n+1-m)}$
such that
\begin{displaymath}
U + (a_{m-1,j})_{j=m}^n - T(a_{ij})_{i,j=m}^n  =
\begin{bmatrix}
  0 & \ldots & 0
\end{bmatrix}
\quad\text{and}\quad
T(v_i)_{i=m}^n = 0.
\end{displaymath}
\end{lemma}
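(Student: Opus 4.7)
The plan is to apply Lemma~\ref{lem:pf.rt1} to the trailing sub-system of $\als{A}$ formed by its last $n+1-m$ rows and columns. First I would verify that $\underline{\als{A}} := (e_1, \underline{A}, \underline{v})$, with $\underline{A} := (a_{ij})_{i,j=m}^n$ and $\underline{v} := (v_i)_{i=m}^n$, is itself a pre-standard ALS whose left family is exactly $\underline{s}$. Since $A$ is upper triangular, rows $m,\ldots,n$ of $As = v$ involve only $s_m,\ldots,s_n$ and assemble into $\underline{A}\,\underline{s} = \underline{v}$; $\underline{A}$ is upper triangular with ones on the diagonal (hence full), and $\underline{v}$ inherits the pre-standard shape $[0,\ldots,0,\lambda]^\top$ because $m \le n$ forces $v_m,\ldots,v_{n-1}$ to vanish. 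By hypothesis, the left family $\underline{s}$ of $\underline{\als{A}}$ is $\field{K}$-linearly independent.

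Next I would exploit the minimality of $m$. Since $(s_{m-1}, s_m, \ldots, s_n)$ is linearly dependent while $(s_m, \ldots, s_n)$ is independent, there is a unique scalar row $U \in \field{K}^{1 \times (n+1-m)}$ with $s_{m-1} = U\underline{s}$. On the other hand, reading off row $m-1$ of $As = v$ and using $v_{m-1} = 0$ (which holds because $m-1 < n$), one obtains $s_{m-1} + \sum_{j=m}^n a_{m-1,j}\, s_j = 0$. Writing $C := (a_{m-1,j})_{j=m}^n$ this says $s_{m-1} = -C\underline{s}$, and combining the two expressions gives $(U + C)\,\underline{s} = 0$.

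Finally, the row $U + C$ has entries of degree $\le 1$ (constants from $U$, linear entries from $C$), so Lemma~\ref{lem:pf.rt1} applied to $\underline{\als{A}}$ yields a unique $T \in \field{K}^{1 \times (n+1-m)}$ with $T\,\underline{A} = U + C$. Rearranging produces the first claim $U + C - T\,\underline{A} = 0$. For the second claim, right-multiplying $T\,\underline{A} = U + C$ by $\underline{s}$ and using $\underline{A}\,\underline{s} = \underline{v}$ gives $T\,\underline{v} = (U + C)\,\underline{s} = 0$. The only real obstacle is verifying that the hypotheses of Lemma~\ref{lem:pf.rt1} transfer cleanly from $\als{A}$ to $\underline{\als{A}}$, and this reduces to the structural checks above plus the standing linear-independence hypothesis on $\underline{s}$.
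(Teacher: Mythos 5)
Your proof is correct and follows essentially the same route as the paper: identify $U$ from the dependence $s_{m-1}=U\underline{s}$, use row $m-1$ of $As=v$ (with $v_{m-1}=0$) to get $(U+C)\underline{s}=0$, and apply Lemma~\ref{lem:pf.rt1} to the trailing subsystem to produce $T$. The only (harmless) deviation is your derivation of $T\underline{v}=0$ via $T\underline{A}\,\underline{s}=(U+C)\underline{s}=0$, which is actually a bit more self-contained than the paper's appeal to the vanishing last column of $T$ established inside the proof of Lemma~\ref{lem:pf.rt1}.
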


\begin{proof}
By assumption, the left subfamily $(s_{m-1}, s_m, \ldots, s_n)$
is $\field{K}$-linearly dependent.
Thus there are $\kappa_j \in \field{K}$ such that
$s_{m-1} = \kappa_m s_m + \kappa_{m+1} s_{m+1} + \ldots + \kappa_n s_n$.
Let $U = [\kappa_m, \kappa_{m+1}, \ldots, \kappa_n ]$.
Then $s_{m-1} - U \underline{s} = 0$.
Since $\als{A}$ is pre-standard, $v_{m-1}=0$.
Now we can apply Lemma~\ref{lem:pf.rt1} with
$B = U + [ a_{m-1,m}, a_{m-1,m+1}, \ldots, a_{m-1,n}]$
(and $\underline{s}$). Hence, there exists
a matrix $T\in \field{K}^{1 \times (n+1-m)}$ such that
\begin{equation}\label{eqn:pf.min1}
U + 
\begin{bmatrix}
a_{m-1,m} & \ldots & a_{m-1,n}
\end{bmatrix}
- T
\begin{bmatrix}
1 & a_{m,m+1} & \ldots & a_{m,n} \\
. & \ddots & \ddots & \vdots \\
. & . & 1 & a_{n-1,n} \\
. & . & . & 1
\end{bmatrix}
=
\begin{bmatrix}
0 & \ldots & 0 
\end{bmatrix}.
\end{equation}
Recall that the last column of $T$ is zero,
whence $T(v_i)_{i=m}^n = 0$.
\end{proof}

\begin{proposition}[Minimal Polynomial Multiplication]\label{pro:pf.minmul}
Let $0\ne p,q \in \freeALG{\field{K}}{X}$ be given by the
\emph{minimal} pre-standard admissible linear systems
$A_p = (u_p, A_p, v_p) = (1, A_p, \lambda_p)$ and
$A_q = (u_q, A_q, v_q) = (1, A_q, \lambda_q)$ of dimension $n_p,n_q \ge 2$ respectively.
Then a \emph{minimal} ALS for $pq$ has dimension $n = n_p + n_q -1$
and can be constructed in the following way:
\begin{itemize}
\item[(1)] Construct the following ALS $\als{A}'=(u',A',v')$ for the product $pq$:
\begin{displaymath}
\begin{bmatrix}
A_p & -v_p u_q \\
. & A_q
\end{bmatrix}
s' =
\begin{bmatrix}
. \\ v_q
\end{bmatrix}.
\end{displaymath}
\item[(2)] Add $\lambda_p$-times column~$n_p$ to column~$n_p+1$
  (the entry $s_{n_p}'$ becomes zero).
\item[(3)] Remove row~$n_p$ of $A'$ and $v'$ and column~$n_p$ of $A'$ and $u'$ and
  denote the new (pre-standard) ALS of dimension $n_p + n_q -1$
  by $\als{A} = (u,A,v) = (1,A,\lambda)$.
\end{itemize}
\end{proposition}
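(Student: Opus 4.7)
The plan is to prove the proposition in two stages: (a) verify that the three operations produce a well-defined pre-standard ALS of dimension $n_p+n_q-1$ whose first solution component equals $pq$; (b) establish minimality via Proposition~\ref{pro:pf.cohn94.47} by showing that the left family of the resulting ALS is $\field{K}$-linearly independent, the argument ultimately reducing to a degree estimate in $\freeALG{\field{K}}{X}$.

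For (a), step~(1) is the product rule from Proposition~\ref{pro:pf.ratop}, yielding an ALS $\als{A}' = (u', A', v')$ for $pq$ of dimension $n_p+n_q$. The pre-standard hypotheses force $v_p = [0,\ldots,0,\lambda_p]\trp$, $u_q = e_1$, the last row of $A_p$ to be $[0,\ldots,0,1]$, and $v'_{n_p}=0$; hence row~$n_p$ of $A'$ is $[0,\ldots,0,1,-\lambda_p,0,\ldots,0]$. Step~(2) is the admissible column operation $A'\mapsto A'E$ with $E = I + \lambda_p\, e_{n_p} e_{n_p+1}\trp$ (it does not touch the first column); it zeros the $(n_p,n_p+1)$-entry. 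Because $u'_{n_p}=0$ we have $u'E = u'$, and the transformed left family $E^{-1}s'$ has $n_p$-th entry $s_{p,n_p}\,q - \lambda_p\, s_{q,1} = \lambda_p q - \lambda_p q = 0$. The $n_p$-th equation now reads $s'_{n_p}=0$, while column~$n_p$ multiplies only the vanishing variable $s'_{n_p}$ in the remaining equations; step~(3) therefore removes a trivial row/column pair and yields an upper-triangular unit-diagonal system with right-hand side $[0,\ldots,0,\lambda_q]\trp$ and first component $pq$, i.e., a pre-standard ALS of dimension $n_p+n_q-1$.

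For (b), the computation above identifies the reduced left family as
\[
L(\als{A}) = (s_{p,1}q,\, s_{p,2}q,\, \ldots,\, s_{p,n_p-1}q,\ s_{q,1},\, s_{q,2},\, \ldots,\, s_{q,n_q}).
\]
Assume $\sum_{i<n_p}\alpha_i s_{p,i} q + \sum_j \beta_j s_{q,j} = 0$ and set $f := \sum_i \alpha_i s_{p,i}$, $g := \sum_j \beta_j s_{q,j}$; both are polynomials, since the entries of $A_p^{-1}v_p$ and $A_q^{-1}v_q$ are polynomials (the inverses are upper triangular with unit diagonal). Then $fq = -g \in L(\als{A}_q)$. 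The key auxiliary step is the identification $L(\als{A}_q) = \linsp\{w^{-1}q : w \in X^*\}$: by descending induction on $j$, the recursion $s_{q,j} = v_{q,j} - \sum_{k>j} a_{jk} s_{q,k}$ together with $x_\ell^{-1}(x_m s_{q,k}) = \delta_{\ell m}\, s_{q,k}$ shows $x_\ell^{-1} s_{q,j} \in L(\als{A}_q)$ for every letter $x_\ell$; iterating gives closure under all word-shifts, and since $q = s_{q,1}$ we obtain $\linsp\{w^{-1}q\} \subseteq L(\als{A}_q)$, with equality by dimension count. Consequently $\deg h \le \deg q$ for every $h \in L(\als{A}_q)$. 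In the free associative algebra $\deg(fq) = \deg f + \deg q$ whenever $f \ne 0$ (the leading monomial is the product of the leading monomials, with no cancellation possible), so $f \in \field{K}$. But $\linsp\{s_{p,i}\}_{i<n_p} \cap \field{K} = \{0\}$: a nonzero scalar in this span, together with $s_{p,n_p}=\lambda_p \in \field{K}^\times$, would give a nontrivial relation in $L(\als{A}_p)$, contradicting minimality. Hence $f=0$, whence $g=0$, and independence of $L(\als{A}_p)$ and $L(\als{A}_q)$ forces $\alpha_i = \beta_j = 0$.

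The main obstacle I anticipate is the auxiliary identification $L(\als{A}_q) = \linsp\{w^{-1}q\}$ and the resulting degree bound; once granted, the proof closes quickly via the leading-monomial identity in $\freeALG{\field{K}}{X}$. An analogous argument using the right family together with $\deg(p\,\delta) = \deg p + \deg \delta$ yields linear independence of the right family and provides a symmetric verification of minimality.
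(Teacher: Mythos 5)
Your proposal is correct, but part (b) takes a genuinely different route from the paper. Your part (a) — checking that the column operation $E=I+\lambda_p e_{n_p}e_{n_p+1}\trp$ is admissible, kills $s'_{n_p}=\lambda_p q-\lambda_p q=0$, and lets one delete a trivial row/column pair — is exactly the paper's setup. For minimality, however, the paper argues by contradiction through Lemma~\ref{lem:pf.min1}: a hypothetical $\field{K}$-dependence in the left family of the product yields, via Lemma~\ref{lem:pf.rt1} (which rests on Cohn's Corollary~6.3.6 about non-full linear matrices being associated to hollow ones), a scalar transformation $(P,Q)$ producing a zero row; truncating $(P,Q)$ to the upper-left $n_p\times n_p$ block transports the dependence into the left family of $\als{A}_p$, contradicting its minimality. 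You instead split a dependence as $fq=-g$ with $f\in\linsp\{s_{p,i}\}$, $g\in L(\als{A}_q)$, identify $L(\als{A}_q)=\linsp\{w^{-1}q : w\in X^*\}$ by shift-closure plus a dimension count, and conclude $\deg(fq)\le\deg q$, so additivity of degree in $\freeALG{\field{K}}{X}$ forces $f\in\field{K}$ and then $f=0$ (since $s_{p,n_p}=\lambda_p$ is a nonzero scalar). Both arguments are sound; the trade-offs are real. Your degree argument is more elementary and avoids the non-full/hollow machinery entirely, but it is specific to polynomials (degree additivity and the word-quotient picture fail for general rational series) and it imports the identification of $\rank q$ with the Hankel rank, which the paper only quotes in a remark; the equality $L(\als{A}_q)=\linsp\{w^{-1}q\}$ genuinely needs minimality for the inclusion you actually use. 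The paper's route is heavier but its lemmas are reused verbatim to drive Algorithm~\ref{alg:pf.minals}, and it stays at the level of linear representations, which is what the rest of the paper (and the announced generalization to the free field) builds on. Your right-family sentence hides one small wrinkle — since $t^q_1=1$ is already a scalar, $\delta=\sum_j\beta_j t^q_j\in\field{K}$ must first be absorbed into the coefficient of $t^q_1$ before invoking independence of $R(\als{A}_p)$ together with $p=\lambda_p t^p_{n_p}$ — but this is routine and the paper is equally terse there.
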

\begin{proof}
The left family of $\als{A}'$ is
\begin{displaymath}
s' = 
\begin{bmatrix}
A_p\inv & A_p\inv v_p u_q \\
. & A_q\inv 
\end{bmatrix}
\begin{bmatrix}
. \\ v_q
\end{bmatrix}
=
\begin{bmatrix}
s_p q \\ s_q
\end{bmatrix}.
\end{displaymath}
Clearly, $\als{A}$ is again pre-standard with $\lambda = \lambda_q$.
Both systems $\als{A}_p$ and $\als{A}_q$ are minimal.
Therefore their left and right families 
are $\field{K}$-linearly independent.
Without loss of generality assume that $\lambda_p = 1$.
Then the last entry $t_{n_p}^p$ of the right family of $\als{A}_p$
is equal to $p$. Let $k = n_p$.
We have to show that both, the left and the right family
\begin{align*}
s &= (s_1, s_2, \ldots, s_n)
   = (s_1^p q,\ldots, s_{k-1}^p q, q, s_2^q, \ldots, s_{n_q}^q), \\
t &= (t_1, t_2, \ldots, t_n)
   = (t_1^p, \ldots, t_{k-1}^p, p, p t_2^q, \ldots, p t_{n_q}^q)
\end{align*}
of $\als{A}$ are $\field{K}$-linearly independent
respectively.
Assume the contrary for $s$, say there
is an index $1< m \le k$ such that $(s_{m-1}, s_m, \ldots, s_n)$
is $\field{K}$-linearly dependent while $(s_m, \ldots, s_n)$
is $\field{K}$-linearly independent.
Then, by Lemma~\ref{lem:pf.min1} there exist
matrices $T,U \in \field{K}^{1 \times (n-m+1)}$
such that $\eqref{eqn:pf.min1}$ holds and therefore
invertible matrices $P,Q \in \field{K}^{n \times n}$,
\begin{displaymath}
P = 
\begin{bmatrix}
I_{m-2} & . & . \\
. & 1 & T \\
. & . & I_{n-m+1}
\end{bmatrix}
\quad\text{and}\quad
Q =
\begin{bmatrix}
I_{m-2} & . & . \\
. & 1 & U \\
. & . & I_{n-m+1}
\end{bmatrix},
\end{displaymath}
that yield equation $s_{m-1} = 0$ (in row~$m-1$) in $P\als{A}Q$.
Let $\tilde{P}$ (respectively $\tilde{Q}$) be the upper
left part of $P$ (respectively $Q$) of size $k \times k$.
Then the equation in row~$m-1$ in $\tilde{P} \als{A}_p \tilde{Q}$
is $s^p_{m-1} = \alpha \in \field{K}$,
contradicting $\field{K}$-linear independence of the left
family of $\als{A}_p$ since $s^p_{k} = \lambda_p \in \field{K}$.
A similar argument (and a variant of Lemma~\ref{lem:pf.min1})
for the right family yields its
$\field{K}$-linear independence.
Hence, by Proposition~\ref{pro:pf.cohn94.47}, the admissible linear
system $\als{A}$ (for $pq$) is minimal.
\end{proof}

\begin{corollary}
Let $0\neq p,q \in \freeALG{\field{K}}{X}$. 
Then $\rank(pq) = \rank(p) + \rank(q) - 1$.
\end{corollary}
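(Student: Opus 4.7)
The plan is to read off the corollary directly from Proposition~\ref{pro:pf.minmul}, after reducing the notion of rank (defined via arbitrary minimal linear representations) to the notion of the dimension of a minimal pre-standard ALS.

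First, I would argue that for every nonzero $p \in \freeALG{\field{K}}{X}$ there exists a minimal pre-standard ALS of dimension $\rank(p)$. Starting from any pre-standard ALS for $p$ (one can always build one inductively from atomic pre-standard ALSes for scalars and letters using Proposition~\ref{pro:pf.ratop}, whose sum and product constructions preserve the pre-standard shape), the naive minimization alluded to at the beginning of Section~\ref{sec:pf.fnp} successively eliminates $\field{K}$-linear dependencies in the left and right families via admissible row and column operations that keep the system pre-standard. Since admissible transformations preserve the represented element and its dimension, and since minimality is equivalent to $\field{K}$-linear independence of both families (Proposition~\ref{pro:pf.cohn94.47}), the resulting ALS has dimension $\rank(p)$.

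Next, I would dispose of the edge case $n_p = 1$ or $n_q = 1$ separately, since Proposition~\ref{pro:pf.minmul} requires $n_p, n_q \ge 2$. Rank one in $\freeALG{\field{K}}{X}$ means that the polynomial in question is a nonzero scalar: if $p = \mu \in \field{K}^\times$, then $pq = \mu q$ and $\rank(pq) = \rank(q) = 1 + \rank(q) - 1$, and the case $q \in \field{K}^\times$ is symmetric.

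For $n_p, n_q \ge 2$, fix minimal pre-standard ALSes for $p$ and $q$ of dimensions $n_p = \rank(p)$ and $n_q = \rank(q)$ as constructed above. Proposition~\ref{pro:pf.minmul} then produces a \emph{minimal} ALS for $pq$ of dimension $n_p + n_q - 1$, so $\rank(pq) = \rank(p) + \rank(q) - 1$. The only real obstacle is the first step: ensuring that ``rank'' in the sense of Definition~\ref{def:pf.rep} coincides with the minimal dimension achievable by a pre-standard ALS. Once this bridge between the general definition and the restricted (pre-standard) setting is in place, the corollary is essentially a restatement of Proposition~\ref{pro:pf.minmul} in rank language.
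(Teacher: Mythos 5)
Your proposal is correct and follows exactly the route the paper intends: the corollary is stated without proof as an immediate consequence of Proposition~\ref{pro:pf.minmul}, and your argument simply makes explicit the two details the paper leaves tacit (that every nonzero polynomial admits a minimal pre-standard ALS of dimension equal to its rank, and the rank-one scalar case excluded by the hypothesis $n_p,n_q\ge 2$). Both of these fill-ins are sound, so this is the same approach, carried out a bit more carefully.
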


\begin{Example}\label{ex:pf.minmul}
The polynomials $p = x \in\freeALG{\field{K}}{X}$
and $q = 1-yx \in\freeALG{\field{K}}{X}$ have the minimal
pre-standard admissible linear systems
\begin{displaymath}
\als{A}_p = \left(
1,
\begin{bmatrix}
1 & - x \\
. & 1 
\end{bmatrix},
1
\right)
\quad\text{and}\quad
\als{A}_q = \left(
1,
\begin{bmatrix}
1 & y & -1 \\
. & 1 & -x \\
. & . & 1
\end{bmatrix},
1
\right)
\end{displaymath}
respectively. Then a pre-standard ALS for $pq = x(1-yx)$ is given by
\begin{displaymath}
\begin{bmatrix}
1 & -x & . & . & . \\
. & 1 & -1 & . & . \\
. & . & 1 & y & -1 \\
. & . & . & 1 & -x \\
. & . & . & . & 1 
\end{bmatrix}
s =
\begin{bmatrix}
. \\ . \\ . \\ . \\ 1
\end{bmatrix},
\quad
s =
\begin{bmatrix}
x(1-yx) \\
1-yx \\
1-yx \\
x \\
1
\end{bmatrix}.
\end{displaymath}
Adding column~2 to column~3 (and subtracting $s_3$ from $s_2$)
yields
\begin{displaymath}
\begin{bmatrix}
1 & -x & -x & . & . \\
. & 1 & 0 & . & . \\
. & . & 1 & y & -1 \\
. & . & . & 1 & -x \\
. & . & . & . & 1 
\end{bmatrix}
s =
\begin{bmatrix}
. \\ . \\ . \\ . \\ 1
\end{bmatrix},
\quad
s =
\begin{bmatrix}
x(1-yx) \\
0 \\
1-yx \\
x \\
1
\end{bmatrix},
\end{displaymath}
thus the pre-standard ALS
\begin{equation}\label{eqn:pf.minmul.1}
\als{A} = \left(
\begin{bmatrix}
1 & . & . & .
\end{bmatrix},
\begin{bmatrix}
1 & -x & 0 & 0 \\
. & 1 & y & -1 \\
. & . & 1 & -x \\
. & . & . & 1 
\end{bmatrix},
\begin{bmatrix}
. \\ . \\ . \\ 1
\end{bmatrix}
\right).
\end{equation}
Since also the right family $t = [1, x, -xy, x(1-yx)]$ is
$\field{K}$-linearly independent, this system is
\emph{minimal} by Proposition~\ref{pro:pf.cohn94.47}.
Note the upper right $1 \times 2$ block of zeros in the system matrix
of $\als{A}$.
\end{Example}

\subsection{Minimizing a pre-standard ALS}\label{sec:pf.fnp.min}

A close look on the proof of Proposition~\ref{pro:pf.minmul}
reveals a surprisingly simple algorithm
for the construction of a \emph{minimal}
pre-standard admissible linear system, provided that one
in pre-standard form is given. It can be used for minimizing
the ALS for the sum in Proposition~\ref{pro:pf.ratop}.
``Simple'' means that it can easily be done manually
also for quite large sparse systems. Depending on the data structure,
the implementation itself is somewhat technical.
One has to be very careful if the scalars (from the ground
field $\field{K}$) cannot be represented exactly,
especially when systems of linear equations (see below)
have to be solved.

To illustrate the main idea we (partially) minimize
a \emph{non-minimal} ``almost'' pre-standard
ALS $\als{A} = (u,A,v)$ of dimension $n=6$
for $p = -xy + (xy + z)$. Note that we do not need knowledge
of the left and right family at all.
Let
\begin{equation}\label{eqn:pf.min0}
\als{A} = \left(
\begin{bmatrix}
1 & . & . & . & . & . 
\end{bmatrix},
\begin{bmatrix}
1 & -x & . & -1 & . & . \\
. & 1 & -y & . & . & . \\
. & . &  1 & . & . & . \\
. & . & . & 1 & -x & -z \\
. & . & . & . & 1 & -y \\
. & . & . & . & . & 1 
\end{bmatrix},
\begin{bmatrix}
. \\ . \\ -1 \\ . \\ . \\ 1
\end{bmatrix}
\right).
\end{equation}
First we do one ``left'' minimization step,
that is, we remove (if possible)
one element of the $\field{K}$-linearly
dependent left family $s = A\inv v$
and construct a new system.
We fix a $1 \le k < n$, say $k=3$.
If we find a (pre-standard admissible)
transformation $(P,Q)$ of the form
\begin{equation}\label{eqn:pf.ltrf}
(P,Q) = \left(
\begin{bmatrix}
I_{k-1} & . & . \\
. & 1 & T \\
. & . & I_{n-k}
\end{bmatrix},
\begin{bmatrix}
I_{k-1} & . & . \\
. & 1 & U \\
. & . & I_{n-k}
\end{bmatrix}
\right)
\end{equation}
such that row~$k$ in $PAQ$ is $[0,0,1,0,0,0]$
and $(Pv)_k = 0$, we can eliminate row~$k$ and
column~$k$ in $P \als{A} Q$ because $(Q\inv s)_k = 0$.
How can we find these blocks $T,U \in \field{K}^{1 \times (n-k)}$?
We write $\als{A}$ in block form
---block row and column indices are
underlined to distinguish them from component indices---
(with respect to row/column~$k$) 
\begin{equation}\label{eqn:pf.bloals}
\als{A}^{[k]} = \left(
\begin{bmatrix}
u_{\block{1}} & . & . 
\end{bmatrix},
\begin{bmatrix}
A_{1,1} & A_{1,2} & A_{1,3} \\
. & 1 & A_{2,3} \\
. & . & A_{3,3}
\end{bmatrix},
\begin{bmatrix}
v_{\block{1}} \\ v_{\block{2}} \\ v_{\block{3}}
\end{bmatrix}
\right)
\end{equation}
and apply the transformation $(P,Q)$:
\begin{align*}
P A Q &= 
\begin{bmatrix}
I_{k-1} & . & . \\
. & 1 & T \\
. & . & I_{n-k}
\end{bmatrix}
\begin{bmatrix}
A_{1,1} & A_{1,2} & A_{1,3} \\
. & 1 & A_{2,3} \\
. & . & A_{3,3}
\end{bmatrix}
\begin{bmatrix}
I_{k-1} & . & . \\
. & 1 & U \\
. & . & I_{n-k}
\end{bmatrix} \\
&=
\begin{bmatrix}
A_{1,1} & A_{1,2} & A_{1,2} U + A_{1,3} \\
. & 1 & U + A_{2,3} + T A_{3,3} \\
. & . & A_{3,3}
\end{bmatrix}, \\
Pv &= 
\begin{bmatrix}
I_{k-1} & . & . \\
. & 1 & T \\
. & . & I_{n-k}
\end{bmatrix}
\begin{bmatrix}
v_{\block{1}} \\ v_{\block{2}} \\ v_{\block{3}}
\end{bmatrix}
=
\begin{bmatrix}
v_{\block{1}} \\ v_{\block{2}} + T v_{\block{3}} \\ v_{\block{3}}
\end{bmatrix}.
\end{align*}
Now we can read of a \emph{sufficient} condition
for $(Q\inv s)_k = 0$, namely
the \emph{existence} of $T,U \in \field{K}^{1 \times (n-k)}$ such that
\begin{equation}\label{eqn:pf.lmsys}
U + A_{2,3} + T A_{3,3} = 0
\quad\text{and}\quad
v_{\block{2}} + T v_{\block{3}} = 0.
\end{equation}
Let $d$ be the number of letters in our alphabet $X$.
The blocks $T= [\alpha_{k+1}, \alpha_{k+2}, \ldots, \alpha_n]$ and
$U = [\beta_{k+1}, \beta_{k+2}, \ldots, \beta_n]$ in the transformation $(P,Q)$
are of size $1 \times (n-k)$, thus we have
a \emph{linear} system of equations (over $\field{K}$) with
$2(n-k)$ unknowns (for $k>1$) and $(d+1)(n-k) + 1$ equations:
\begin{align*}
\begin{bmatrix}
\beta_{k+1} & \beta_{k+2} & \beta_{k+3}
\end{bmatrix}
+
\begin{bmatrix}
0 & 0 & 0 
\end{bmatrix}
+
\begin{bmatrix}
\alpha_{k+1} & \alpha_{k+2} & \alpha_{k+3}
\end{bmatrix}
\begin{bmatrix}
1 & -x & -z \\
. & 1 & -y \\
. & . & 1
\end{bmatrix}
&=
\begin{bmatrix}
0 & 0 & 0
\end{bmatrix}, \\
\begin{bmatrix}
-1
\end{bmatrix}
+
\begin{bmatrix}
\alpha_{k+1} & \alpha_{k+2} & \alpha_{k+3}
\end{bmatrix}
\begin{bmatrix}
. \\ . \\ 1
\end{bmatrix}
&=
\begin{bmatrix}
0
\end{bmatrix}.
\end{align*}
One solution is $T=[0,0,1]$ and $U=[0,0,-1]$.
We compute $\tilde{\als{A}}_1 = P\als{A}Q$ 
and remove block row~$\block{2}$
and column~$\block{2}$,
that is, row~$k$ and column~$k$, to get the new ALS
\begin{displaymath}
\als{A}_1 = (u,A,v) = \left(
\begin{bmatrix}
1 & . & . & . & .
\end{bmatrix},
\begin{bmatrix}
1 & -x & -1 & . & . \\
. & 1 & . & . & y \\
. & . & 1 & -x & -z \\
. & . & . & 1 & -y \\
. & . & . & . & 1
\end{bmatrix},
\begin{bmatrix}
. \\ . \\ . \\ . \\ 1
\end{bmatrix}
\right).
\end{displaymath}
Next we do one ``right'' minimization step, that is, we remove (if possible)
one element of the $\field{K}$-linearly dependent right family $t = u A\inv$
and construct a new system.
We fix a $1 < k \le n=5$, say $k=3$. Note that $t_1 = 1$.
Now we are looking for a transformation $(P,Q)$ of the form
\begin{equation}\label{eqn:pf.rtrf}
(P,Q) = \left(
\begin{bmatrix}
I_{k-1} & T & . \\
. & 1 & . \\
. & . & I_{n-k}
\end{bmatrix},
\begin{bmatrix}
I_{k-1} & U & . \\
. & 1 & . \\
. & . & I_{n-k}
\end{bmatrix}
\right)
\end{equation}
such that column~$k$ in $PAQ$ is $[0,0,1,0,0]\trp$ (for an
admissible transformation, that is,
$U$ has a zero first row, the corresponding
entry $u_k$ in $u$ is zero).
A sufficient
condition for $(t P\inv)_k=0$ is the existence of
$T,U \in \field{K}^{(k-1) \times 1}$ such that
\begin{equation}\label{eqn:pf.rmsys}
A_{1,1} U + A_{1,2} + T = 0.
\end{equation}
(In Remark~\ref{rem:pf.complexity}
there is a less ``compressed'' version of this linear system
of equations.)
One solution is $T=[1,0]\trp$ and $U=[0,0]\trp$.
We compute $\tilde{\als{A}}_2 = P \als{A}_1 Q$ 
and remove
row~$k$ and column~$k$ to get the new (not yet minimal) ALS
\begin{displaymath}
\als{A}_2 = (u,A,v) = \left(
\begin{bmatrix}
1 & . & . & . 
\end{bmatrix},
\begin{bmatrix}
1 & -x & -x & -z \\
. & 1 & . & y \\
. & . & 1 & -y \\
. & . & . & 1
\end{bmatrix},
\begin{bmatrix}
. \\ . \\ . \\ 1
\end{bmatrix}
\right).
\end{displaymath}
If a left (respectively right) minimization step with $k=1$
(respectively $k=n$ and $v = [0,\ldots,0,\lambda]\trp$)
can be done, then the ALS represents zero and we can stop immediately.

The following is the only non-trival observation:
Recall that, if there exist row (respectively column) blocks $T,U$ such that
\eqref{eqn:pf.lmsys} (respectively \eqref{eqn:pf.rmsys})
has a solution then the left (respectively right) family
is $\field{K}$-linearly dependent.
To guarantee \emph{minimality} by Proposition~\ref{pro:pf.cohn94.47}
we need the other implication, that is,
the existence of appropriate row \emph{or} column blocks
for non-minimal pre-standard admissible linear systems.

Although the arguments can be found in the proof of
Proposition~\ref{pro:pf.minmul}, we repeat them here
because this is the crucial part of the minimization
algorithm:
Let $\als{A} = (u,A,v)$ be a pre-standard ALS of
dimension $n \ge 2$ with left family $s = (s_1, s_2, \ldots, s_n)$
and assume that there exists a $1 \le k < n$ such that
the subfamily $(s_{k+1}, s_{k+2}, \ldots, s_n)$ is $\field{K}$-linearly
independent while $(s_k, s_{k+1}, \ldots, s_n)$ is
$\field{K}$-linearly dependent.
Then, by Lemma~\ref{lem:pf.min1}, there exist
matrices $T,U \in \field{K}^{1 \times (n-k)}$
such that \eqref{eqn:pf.lmsys} holds.
In other words: We have to start with $k_s=n-1$ for a left
and $k_t=2$ for a right minimization step.

If we apply one minimization step, we must check
the other family ``again'', illustrated in the following
example, which is \emph{not} constructed out of two
minimal systems:
\begin{displaymath}
\als{A} = (u,A,v) = \left(
\begin{bmatrix}
1 & . & . & . & . 
\end{bmatrix},
\begin{bmatrix}
1 & -x & -y & x+y & . \\
. & 1 & . & . & -z \\
. & . & 1 & . & -z \\
. & . & . & 1 & -y \\
. & . & . & . & 1 \\
\end{bmatrix},
\begin{bmatrix}
. \\ . \\ . \\ . \\ 1
\end{bmatrix}
\right).
\end{displaymath}
Clearly, the left subfamily
$(s_3,s_4,s_5)$ 
and the right subfamily $(t_1, t_2, t_3)$
of $\als{A}$ 
are $\field{K}$-linearly independent respectively.
If we subtract row~3 from row~2 and add column~2 to column~3,
we get the ALS
\begin{displaymath}
\als{A}' = (u',A',v') = \left(
\begin{bmatrix}
1 & . & . & . & . 
\end{bmatrix},
\begin{bmatrix}
1 & -x & -x-y & x+y & . \\
. & 1 & 0 & . & 0 \\
. & . & 1 & . & -z \\
. & . & . & 1 & -y \\
. & . & . & . & 1 \\
\end{bmatrix},
\begin{bmatrix}
. \\ . \\ . \\ . \\ 1
\end{bmatrix}
\right).
\end{displaymath}
The right subfamily $(t_1'',t_2'',t_3'')$
of $\als{A}'' = \als{A}'\mthstrut^{[-2]}$
is (here) \emph{not} $\field{K}$-linearly independent anymore,
therefore we must check for a right minimization step
for $k=3$ again.

\begin{definition}[Minimization Equations, Minimization Transformations]
\label{def:pf.meqn}
Let $\als{A} = (u,A,v)$ be a pre-standard ALS of dimension
$n \ge 2$.
Recall the block decomposition \eqref{eqn:pf.bloals}
\begin{displaymath}
\als{A}^{[k]} = \left(
\begin{bmatrix}
u_{\block{1}} & . & . 
\end{bmatrix},
\begin{bmatrix}
A_{1,1} & A_{1,2} & A_{1,3} \\
. & 1 & A_{2,3} \\
. & . & A_{3,3}
\end{bmatrix},
\begin{bmatrix}
v_{\block{1}} \\ v_{\block{2}} \\ v_{\block{3}}
\end{bmatrix}
\right).
\end{displaymath}
By $\als{A}^{[-k]}$ we denote the ALS $\als{A}^{[k]}$
without row/column~$k$
(of dimension $n-1$):
\begin{displaymath}
\als{A}^{[-k]} = \left(
\begin{bmatrix}
u_{\block{1}} & . 
\end{bmatrix},
\begin{bmatrix}
A_{1,1} & A_{1,3} \\
. & A_{3,3}
\end{bmatrix},
\begin{bmatrix}
v_{\block{1}} \\ v_{\block{3}}
\end{bmatrix}
\right).
\end{displaymath}
For $k = \{ 1,2,\ldots, n-1 \}$ the equations 
$U + A_{2,3} + T A_{3,3} = 0$ and $v_{\block{2}} + T v_{\block{3}} = 0$,
see \eqref{eqn:pf.lmsys},
with respect to the block decomposition $\als{A}^{[k]}$
are called \emph{left minimization equations}%
\index{left minimization equations},
denoted by $\mathcal{L}_k = \mathcal{L}_k(\als{A})$.
A solution by the row block pair $(T,U)$ is denoted by
$\mathcal{L}_k(T,U) = 0$, 
the corresponding transformation $(P,Q) = \bigl(P(T), Q(U) \bigr)$,
see \eqref{eqn:pf.ltrf},
is called \emph{left minimization transformation}%
\index{left minimization transformation}.
For $k = \{ 2,3,\ldots, n \}$ the equations
$A_{1,1} U + A_{1,2} + T = 0$, see \eqref{eqn:pf.rmsys},
with respect to the block decomposition $\als{A}^{[k]}$
are called \emph{right minimization equations}%
\index{right minimization equations},
denoted by $\mathcal{R}_k = \mathcal{R}_k(\als{A})$.
A solution by the column block pair $(T,U)$ is denoted by
$\mathcal{R}_k(T,U) = 0$,
the corresponding transformation $(P,Q) = \bigl(P(T), Q(U) \bigr)$,
see \eqref{eqn:pf.rtrf},
is called \emph{right minimization transformation}%
\index{right minimization transformation}.
\end{definition}

Now there is only one important detail left, namely
that we cannot apply Lemma~\ref{lem:pf.min1} in the following
(first) left minimization step. For $0 \neq \alpha \in \field{K}$
consider the ALS $\als{A}$
\begin{displaymath}
\begin{bmatrix}
1 & -\alpha \\
. & 1 
\end{bmatrix}
s =
\begin{bmatrix}
. \\ \lambda
\end{bmatrix}
\end{displaymath}
and (admissibly) transform the system matrix $A$ in the following way:
\begin{displaymath}
\underbrace{%
\begin{bmatrix}
. & \alpha \\
1 & . 
\end{bmatrix}}_{=:P}
\begin{bmatrix}
1 & -\alpha \\
. & 1 
\end{bmatrix}
\underbrace{%
\begin{bmatrix}
1 & . \\
1/\alpha & 1
\end{bmatrix}}_{=:Q}
=
\begin{bmatrix}
. & \alpha \\
1 & -\alpha
\end{bmatrix}
\begin{bmatrix}
1 & . \\
1/\alpha & 1
\end{bmatrix} \\
=
\begin{bmatrix}
1 & \alpha \\
0 & -\alpha
\end{bmatrix}
\end{displaymath}
thus
\begin{equation}\label{eqn:pf.left1}
\begin{bmatrix}
1 & \alpha \\
. & -\alpha
\end{bmatrix}
s = 
\begin{bmatrix}
\alpha \lambda \\
0
\end{bmatrix}
\end{equation}
and we can remove the \emph{last} row and column.
Note that we do not have to consider such a special case
for the right family.

\begin{algorithm}[Minimizing a pre-standard ALS]\label{alg:pf.minals}
\ \\
Input: $\als{A} = (u,A,v)$ pre-standard ALS
  of dimension $n \ge 2$ (for some polynomial $p$).\\
Output: $\als{A}' = (,,)$ if $p=0$ or
        a minimal pre-standard ALS $\als{A}' = (u',A',v')$ if $p \neq 0$.
        
\begin{algtest}
\hbox{}\\[-3ex]
\lnum{1:}\>$k := 2$ \\
\lnum{2:}\>while $k \le \dim \als{A}$ do \\
\lnum{3:}\>\>$n := \dim(\als{A})$ \\
\lnum{4:}\>\>$k' := n  +1 - k$ \\
\lnum{  }\>\>\textnormal{Is the left subfamily
  \raisebox{0pt}[0pt][0pt]{%
    $(s_{k'},\overbrace{\mthstrut s_{k'+1},\ldots, s_{n}}^{\text{lin.~indep.}})$}
    $\field{K}$-linearly dependent?} \\
\lnum{5:}\>\>if $\exists\, T,U \in \field{K}^{1 \times (k-1)}
  \textnormal{ admissible}: \mathcal{L}_{k'}(\als{A})=\mathcal{L}_{k'}(T,U)=0$ then \\
\lnum{6:}\>\>\>if $k' = 1$ then \\
\lnum{7:}\>\>\>\>return $(,,)$ \\
\lnum{  }\>\>\>endif \\
\lnum{8:}\>\>\>\raisebox{0pt}[0pt][0pt]{%
      $\als{A} := \bigl(P(T) \als{A} Q(U)\bigr)\mthstrut^{[-k']}$} \\
\lnum{9:}\>\>\>if $k > \max \bigl\{ 2, \frac{n+1}{2} \bigr\}$ then \\
\lnum{10:}\>\>\>\>$k := k-1$ \\
\lnum{   }\>\>\>endif \\
\lnum{11:}\>\>\>continue \\
\lnum{   }\>\>endif \\
\lnum{12:}\>\>if $k = 2$ and $s_{n-1} = \alpha s_n$ \textnormal{(for some $\alpha\in \field{K}$)} then \\
\lnum{13:}\>\>\>find \textnormal{admissible} $(P,Q)$ such that $(Q\inv s)_n=0$ 
  and $PAQ$ is \textnormal{upper triangular} \\
\lnum{14:}\>\>\>$\als{A} := (P\als{A} Q)^{[-n]}$ \\
\lnum{15:}\>\>\>continue \\
\lnum{   }\>\>endif \\
\lnum{   }\>\>\textnormal{Is the right subfamily
      \raisebox{0pt}[0pt][0pt]{%
            $(\overbrace{\mthstrut t_1, \ldots,t_{k-1}}^{\text{lin.~indep.}}, t_k)$}
          $\field{K}$-linearly dependent?} \\
\lnum{16:}\>\>if $\exists\, T,U \in \field{K}^{(k-1) \times 1}
    \textnormal{ admissible} : 
     \mathcal{R}_k(\als{A}) = \mathcal{R}_k(T,U)=0$ then \\
\lnum{17:}\>\>\>$\als{A} := \bigl(P(T) \als{A} Q(U) \bigr)\mthstrut^{[-k]}$ \\
\lnum{18:}\>\>\>if $k > \max \bigl\{ 2, \frac{n+1}{2} \bigr\}$ then \\
\lnum{19:}\>\>\>\>$k := k-1$ \\
\lnum{   }\>\>\>endif \\
\lnum{20:}\>\>\>continue \\
\lnum{   }\>\>endif \\
\lnum{21:}\>\>$k := k+1$ \\
\lnum{   }\>done \\
\lnum{22:}\>return $P\als{A},$ 
        \textnormal{with $P$, such that $Pv = [0,\ldots,0,\lambda]\trp$}
\end{algtest}
\end{algorithm}

\begin{proof}
The admissible linear system $\als{A}$
represents $p=0$ if and only if $s_1 = (A\inv v)_1 = 0$.
Since all systems are equivalent to $\als{A}$,
this case is recognized for $k'=1$ because
by Lemma~\ref{lem:pf.min1} there is an \emph{admissible}
transformation such that the first left minimization equation
is fulfilled.
Now assume $p \neq 0$.
We have to show that both, the left family $s'$ and
the right family $t'$ of $\als{A}' = (u',A',v')$ are
$\field{K}$-linearly independent respectively.
Let $n' = \dim(\als{A}')$ and for $k \in \{ 1, 2, \ldots, n' \}$
denote by
\raisebox{0pt}[0pt][0pt]{%
$s'_{(k)} = (s'_{n'+1-k}, s'_{n'+2-k}, \ldots, s'_{n'})$}
the left and by 
\raisebox{0pt}[0pt][0pt]{%
$t'_{(k)} = (t'_1, t'_2, \ldots, t'_k)$}
the right subfamily.
By assumption $\als{A}$ is a pre-standard ALS and
therefore $s'_{n'} \neq 0$ and $t'_1 \neq 0$,
that is, $s'_{(1)}$ is $\field{K}$-linearly independent
and $t'_{(1)}$ is $\field{K}$-linearly independent.
The loop starts with $k=2$.
Only if both, $s'_{(k)}$ and $t'_{(k)}$
are $\field{K}$-linearly indpendent respectively,
$k$ is incremented. For $k=2$ we can solve the special
case by~\eqref{eqn:pf.left1} for the left family.
Otherwise a left (Lemma~\ref{lem:pf.min1}) or a right
(variant of Lemma~\ref{lem:pf.min1}) minimization step
was successful and the dimension of the current ALS is
strictly smaller than that of the previous.
Hence, since $k$ is bounded from below,
the algorithm stops in a finite number of steps.
We just have to make sure that there exists an
admissible transformation, if there exists a
column couple $(T,U)$ such that the right minimization
equations $\mathcal{R}_k(T,U)=0$ are fulfilled.
However, if the first column would be involved to
eliminate the first entry in column~$k$, the
$k$-th row could be used instead.
Clearly, $\als{A}'$ is in pre-standard form.
\end{proof}

\begin{Remark}\label{rem:pf.complexity}
This algorithm can be implemented very efficiently
provided that row and column transformations are
done directly. Let $d$ be the number of letters in our alphabet $X$.
For $\ell=0,1,\ldots,d$ let $A_{ij}^{(\ell)}$ denote the
submatrix corresponding to letter $x_\ell$ and the (current)
block decomposition of $\als{A}^{[k]}$.
The right minimization equations
$A_{1,1} U + A_{1,2} + T = 0$ can be written as
\begin{displaymath}
\begin{bmatrix}
I & A_{1,1}^{(0)} \\
I & A_{1,1}^{(1)} \\
\vdots & \vdots \\
I & A_{1,1}^{(d)}
\end{bmatrix}
\begin{bmatrix}
T \\ U
\end{bmatrix}
=
\begin{bmatrix}
- A_{1,2}^{(0)} \\
- A_{1,2}^{(1)} \\
\vdots \\
- A_{1,2}^{(d)}
\end{bmatrix}
\end{displaymath}
with $2(k-1)$ unknowns, $k<n$.
By Gaussian elimination one
gets complexity $\complexity(dn^3)$
for solving such a system,
see \cite[Section~2.3]{Demmel1997a}
.
To build such a system and working on a
linear matrix pencil
$\bigl[\begin{smallmatrix} 0 & u \\ v & A \end{smallmatrix}\bigr]$
with $d+1$ square
coefficient matrices of size $n+1$
(transformations, etc.)
has complexity $\complexity(dn^2)$.
Since there are at most $2(n-1)$ steps,
we get overall (minimization) complexity
$\complexity(d n^4)$.
The algorithm of \cite{Cardon1980a}
\ has complexity $\complexity(d n^3)$.
One has to be careful with a direct comparison.
The latter works more general for regular elements,
that is, rational formal power series.
However the idea used here generalizes directly for
larger blocks, say for a block decomposition $\als{A}^{[k,k+1,\ldots,k+m]}$
for $k,m < n$ and can be used partially for \emph{non-regular} elements
in the free field, for example to solve the \emph{word problem}
which would have complexity $\complexity(d n^6)$.
Further details can be found in 
\cite{Schrempf2017a}
. 
\end{Remark}

\subsection{Factorizing non-commutative Polynomials}\label{sec:pf.fnp.fact}

\begin{definition}[Atomic Admissible Linear Systems]\label{def:pf.atom}
A \emph{minimal} pre-standard ALS $\als{A} = (1,A,\lambda)$
of dimension $n\ge 2$ is called \emph{atomic}\index{atomic ALS}
(\emph{irreducible})\index{irreducible ALS},
if there is no pre-standard admissible transformation $(P,Q)$ such that
$PAQ$ has an upper right block of zeros of size $(n-i-1) \times i$ for some
$i=1,2,\ldots,n-2$.
\end{definition}

If we take the minimal ALS \eqref{eqn:pf.minmul.1}
for $pq = x(1-yx)$ 
from Example~\ref{ex:pf.minmul} and
add column~2 to column~4, we obtain
\begin{displaymath}
\begin{bmatrix}
1 & -x & . & -x \\
. & 1 & y & 0 \\
. & . & 1 & -x \\
. & . & . & 1 
\end{bmatrix}
s =
\begin{bmatrix}
. \\ . \\ . \\ 1
\end{bmatrix},
\quad
s =
\begin{bmatrix}
x(1-yx) \\
-yx \\
x \\
1
\end{bmatrix}.
\end{displaymath}
Subtracting row~3 from row~1 yields
\begin{displaymath}
\begin{bmatrix}
1 & -x & -1 & 0 \\
. & 1 & y & 0 \\
. & . & 1 & -x \\
. & . & . & 1 
\end{bmatrix}
s =
\begin{bmatrix}
. \\ . \\ . \\ 1
\end{bmatrix},
\quad
s =
\begin{bmatrix}
x(1-yx) \\
-yx \\
x \\
1
\end{bmatrix}
\end{displaymath}
with an upper right $2 \times 1$ block of zeros in the system matrix.
We would obtain the same system by \emph{minimal polynomial multiplication}
of $1-xy$ and $x$. This illustrates that we can find the factors of
non-commutative polynomials by searching for
pre-standard admissible transformations
that give a corresponding upper right block of zeros in the transformed
system matrix.

Theorem~\ref{thr:pf.factorization}
will establish the correspondence between a factorization into
atoms and the structure of the upper right blocks of zeros.
Thus, to factorize a polynomial $p$ of rank~$n\ge 3$ into \emph{non-trivial} factors
$p = q_1 q_2$ with $\rank(q_i)= n_i \ge 2$ and $n = n_1 + n_2 - 1$,
we have to look for (pre-standard admissible) transformations of the form
\begin{equation}\label{eqn:pf.factrn}
(P,Q) = \left(
\begin{bmatrix}
1 & \alpha_{1,2} & \ldots & \alpha_{1,n-1} & 0 \\
  & \ddots & \ddots & \vdots & \vdots \\
  &   & 1 & \alpha_{n-2,n-1} & 0 \\
  &   &   & 1 & 0 \\
  &   &   &   & 1
\end{bmatrix},
\begin{bmatrix}
1 & 0 & 0 & \ldots & 0 \\
  & 1 & \beta_{2,3} & \ldots & \beta_{2,n} \\
  &   & 1 & \ddots & \vdots  \\
  &   &   & \ddots & \beta_{n-1,n} \\
  &   &   &   & 1 \\
\end{bmatrix}
\right)
\end{equation}
with entries $\alpha_{ij}, \beta_{ij} \in \field{K}$.
In general this is a \emph{non-linear} problem
with $(n-2)(n-1)$ unknowns.

\begin{definition}[Standard Admissible Linear Systems]\label{def:pf.stdals}
Every
\begin{itemize}
\item ALS $(1, I_1, \lambda)$ for a scalar $0 \neq \lambda \in \field{K}$,
\item atomic ALS and
\item non-atomic \emph{minimal} pre-standard ALS
  $\als{A} = (1,A,\lambda)$ for $p \in \freeALG{\field{K}}{X}$
  of dimension $n \ge 3$ obtained
  from minimal multiplication (Proposition~\ref{pro:pf.minmul}) 
  of atomic admissible linear systems for
  its atomic factors $p = q_1 q_2 \cdots q_m$ (with atoms $q_i$)
\end{itemize}
is called \emph{standard}\index{standard ALS}.
\end{definition}

\begin{remark}
We have the following chain of ``inclusions'' of admissible linear systems:
Atomic ALS $\subsetneq$ Standard ALS $\subsetneq$ Pre-Standard ALS
$\subsetneq$ ALS.
\end{remark}

\begin{Example}\label{ex:pf.irred}
Let $p = x^2 -2\in \freeALG{\field{K}}{X}$ be given by the minimal ALS
\begin{displaymath}
\als{A} = \left(
\begin{bmatrix}
1 & . & . 
\end{bmatrix},
\begin{bmatrix}
1 & -x & 2 \\
. & 1 & -x \\
. & . & 1
\end{bmatrix},
\begin{bmatrix}
. \\ . \\ 1
\end{bmatrix}
\right).
\end{displaymath}
If $\field{K} = \numQ$ then $\als{A}$ is atomic (irreducible).
If $\field{K} = \numR$ then there is the pre-standard admissible transformation
\begin{displaymath}
(P,Q) = \left(
\begin{bmatrix}
1 & \sqrt{2} & . \\
. & 1 & . \\
. & . & 1
\end{bmatrix},
\begin{bmatrix}
1 & . & . \\
. & 1 & -\sqrt{2} \\
. & . & 1
\end{bmatrix}
\right)
\end{displaymath}
such that $\als{A}' =P\als{A} Q$ is
\begin{displaymath}
\als{A}'= \left(
\begin{bmatrix}
1 & . & . 
\end{bmatrix},
\begin{bmatrix}
1 & -x+\sqrt{2} & 0 \\
. & 1 & -x-\sqrt{2} \\
. & . & 1
\end{bmatrix},
\begin{bmatrix}
. \\ . \\ 1
\end{bmatrix}
\right),
\end{displaymath}
thus $p = x^2 -2 = (x-\sqrt{2})(x+\sqrt{2})$ in $\freeALG{\numR}{X}$.
\end{Example}

\begin{remark}
Note that it is easy to check that 
$p = xy -2 \in \freeALG{\field{K}}{X}$ 
is atomic, because both entries, $\alpha_{1,2}$ in $P$
and $\beta_{2,3}$ in $Q$ have to be zero
(otherwise the upper right entry in $A'$ could not
become zero) and hence there is \emph{no} non-trivial pre-standard
admissible transformation, that is, transformation that changes
the upper right block structure.
\end{remark}

\begin{lemma}\label{lem:pf.factorization}
Let $0\ne p,q_1,q_2 \in \freeALG{\field{K}}{X}$ be given by the \emph{minimal}
pre-standard admissible linear systems $\als{A} = (1,A,\lambda)$,
$\als{A}_1 = (1,A_1,\lambda_1)$ and $\als{A}_2 = (1,A_2,\lambda_2)$
of dimension $n=\rank p$ and $n_1,n_2 \ge 2$ respectively
with $p = q_1 q_2$.
Then there exists a pre-standard admissible transformation $(P,Q)$ such
that $PAQ$ has an upper right block of zeros of size
$(n_1-1) \times (n_2-1) = (n-n_2) \times (n-n_1)$.
\end{lemma}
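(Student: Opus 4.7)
The plan is to exhibit a particular minimal pre-standard ALS $\tilde{\als{A}}$ for $p$ that manifestly has the desired block of zeros, and then transport this structural feature back to $\als{A}$ via the transformation that relates any two minimal linear representations of the same element.

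First I would apply Proposition~\ref{pro:pf.minmul} to $\als{A}_1$ and $\als{A}_2$ to produce a minimal pre-standard ALS $\tilde{\als{A}} = (1,\tilde A,\tilde\lambda)$ for $p = q_1 q_2$ of dimension $n = n_1 + n_2 - 1$. I would then check directly that $\tilde A$ contains the claimed $(n_1-1) \times (n_2-1)$ upper right block of zeros. Before step~(2) of the construction the system matrix
\begin{displaymath}
\begin{bmatrix} A_1 & -v_1 u_2 \\ . & A_2 \end{bmatrix}
= \begin{bmatrix} A_1 & -\lambda_1 e_{n_1} e_1\trp \\ . & A_2 \end{bmatrix}
\end{displaymath}
has its full upper right $n_1 \times n_2$ block equal to zero except for the single entry $-\lambda_1$ at position $(n_1, 1)$ of that block. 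Step~(2) modifies only column $n_1+1$ and step~(3) removes precisely row $n_1$ and column $n_1$. Hence, for the new numbering, rows $1,\ldots,n_1-1$ intersected with columns $n_1+1,\ldots,n_1+n_2-1$ (the old columns $n_1+2,\ldots,n_1+n_2$) remain zero, which is exactly the stated block.

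Next I would invoke the fact ---quoted in the introduction from the Cohn--Reutenauer normal form--- that any two \emph{minimal} linear representations of the same element of $\freeFLD{\field{K}}{X}$ are related by invertible matrices over $\field{K}$. Since both $\als{A}$ and $\tilde{\als{A}}$ are minimal linear representations of $p$ of the same dimension $n = \rank p$, there exist invertible $P, Q \in \field{K}^{n \times n}$ with
\begin{displaymath}
\tilde u = u Q, \qquad \tilde A = PAQ, \qquad \tilde v = P v.
\end{displaymath}

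Finally I would verify that $(P, Q)$ is pre-standard admissible. Admissibility is automatic: since $u = \tilde u = e_1$, the identity $e_1 Q = e_1$ forces the first row of $Q$ to be $e_1$. Pre-standardness is immediate from the definition in Definition~\ref{def:pf.psals}, because the transformed system $P\als{A}Q = \tilde{\als{A}}$ is pre-standard by construction. Hence $(P, Q)$ is a pre-standard admissible transformation and $PAQ = \tilde A$ carries the required $(n_1-1) \times (n_2-1)$ upper right block of zeros. The only subtlety is the verification in step~2 that Proposition~\ref{pro:pf.minmul} really yields that block in the stated position, which is a direct matrix computation; once this is in place, the Cohn--Reutenauer uniqueness of minimal representations does all the rest of the work.
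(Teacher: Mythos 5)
Your proposal is correct and follows essentially the same route as the paper: construct the minimal product ALS via Proposition~\ref{pro:pf.minmul}, observe that it carries the $(n_1-1)\times(n_2-1)$ upper right zero block by construction, and transport it to $\als{A}$ using the uniqueness (up to invertible $P,Q$ over $\field{K}$) of minimal linear representations, with admissibility and pre-standardness read off exactly as you do. The paper simply cites \cite[Theorem~1.4]{Cohn1999a} for the transformation and states the zero block ``by construction'' where you spell out the entry-level verification; there is no substantive difference.
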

\begin{proof}
Let $\als{A}'=(1,A',\lambda_2)$ be the minimal ALS from
Proposition~\ref{pro:pf.minmul}
for $q_1 q_2$. Clearly, we have $\dim \als{A}' =  n_1 + n_2 - 1 = n = \rank p$.
And $A'$ has, by construction, an upper right block of zeros of size
$(n_1-1) \times (n_2-1)$. Both, $\als{A}$ and $\als{A}'$ represent the
same element $p$, thus 
---by \cite[Theorem~1.4]{Cohn1999a}
--- there exists an admissible transformation $(P,Q)$
such that $P\als{A}Q = \als{A}'$. Since $\als{A}'$ is
pre-standard, $(P,Q)$ is a pre-standard admissible transformation.
\end{proof}

\begin{theorem}[Polynomial Factorization]\label{thr:pf.factorization}
Let $p \in \freeALG{\field{K}}{X}$ be given by the
\emph{minimal} pre-standard admissible linear system $\als{A} = (1,A,\lambda)$
of dimension $n = \rank p \ge 3$.
Then $p$ has a factorization into $p = q_1 q_2$ with $\rank(q_i) = n_i \ge 2$
if and only if there exists a pre-standard admissible transformation $(P,Q)$
such that $PAQ$ has an upper right block of zeros of size $(n_1 - 1) \times (n_2 - 1)$.
\end{theorem}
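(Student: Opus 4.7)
The forward direction is immediate from Lemma~\ref{lem:pf.factorization}: a factorization $p = q_1 q_2$ of the prescribed ranks already supplies the desired pre-standard admissible transformation $(P,Q)$. So the real work is the converse.

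For that direction, the plan is to replace $\als{A}$ by $P\als{A}Q$ (again a minimal pre-standard ALS for $p$), so that $A$ itself carries the upper-right block of zeros of size $(n_1-1)\times(n_2-1)$. Setting $k := n_1$ and combining the zero block with pre-standardness (upper triangular, $1$'s on the diagonal) decomposes the system as
\[
A = \begin{bmatrix} A_1 & c & 0 \\ . & 1 & r \\ . & . & A_2 \end{bmatrix}, \qquad v = \begin{bmatrix} . \\ . \\ v_2 \end{bmatrix},
\]
with $A_1$ of size $(k-1)\times(k-1)$, $A_2$ of size $(n-k)\times(n-k)$, and $v_2 = [0,\ldots,0,\lambda]\trp$. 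The candidate factor systems are
\[
\als{A}_1 := \Bigl(e_1,\, \begin{bmatrix} A_1 & c \\ . & 1 \end{bmatrix},\, [0,\ldots,0,1]\trp\Bigr), \quad
\als{A}_2 := \Bigl(e_1,\, \begin{bmatrix} 1 & r \\ . & A_2 \end{bmatrix},\, v_2\Bigr),
\]
both already in pre-standard shape. Partitioning the left family of $\als{A}$ as $s = (s^{(1)}, s_k, s^{(2)})$ and solving $As = v$ row-block by row-block yields $s^{(2)} = A_2\inv v_2$, then $s_k = -r s^{(2)}$, and finally $s^{(1)} = -A_1\inv c \cdot s_k$. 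A parallel computation inside the two sub-ALS's identifies $q_2 := s_k$ as the element represented by $\als{A}_2$ and $q_1 := -(A_1\inv c)_1$ as that represented by $\als{A}_1$; reading off the first component of $s^{(1)}$ then gives $p = q_1 q_2$.

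To finish, I would verify that $q_1, q_2 \in \freeALG{\field{K}}{X}$ and that $\rank(q_i) = n_i \ge 2$. Each sub-system matrix has the form $I - M$ with $M$ strictly upper triangular as a matrix over $\freeALG{\field{K}}{X}$, so $M$ is nilpotent and $(I-M)\inv = \sum_{j \ge 0} M^j$ reduces to a finite polynomial sum; thus $\als{A}_1, \als{A}_2$ are bona fide pre-standard representations of polynomials. The rank formula $\rank(pq) = \rank(p) + \rank(q) - 1$ (the corollary following Proposition~\ref{pro:pf.minmul}) combined with the trivial bound $\rank(q_i) \le \dim(\als{A}_i) = n_i$ then forces $\rank(q_i) = n_i$, giving simultaneously the minimality of the two sub-ALS's and the rank lower bound $n_i \ge 2$.

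The one delicate step I anticipate is the structural claim that an upper-right block of zeros in a pre-standard ALS is exactly the signature of the minimal-multiplication construction of Proposition~\ref{pro:pf.minmul} running backwards; once the two truncated ALS's have been cleanly extracted, the nilpotency observation (polynomiality) and the additive rank formula (minimality) close the loop.
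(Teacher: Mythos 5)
Your proposal is correct and follows essentially the same route as the paper: the forward direction via Lemma~\ref{lem:pf.factorization}, and the converse by reading the two candidate factor systems off the block decomposition forced by the zero block together with pre-standardness. The only cosmetic difference is that you solve the block system for the left family directly where the paper inserts a ``dummy'' row to recover literally the shape of the product ALS from Proposition~\ref{pro:pf.ratop}; your closing rank argument ($\rank(q_i)\le n_i$ plus the additivity corollary forcing equality) is a nice way of making explicit the claim $\rank(q_i)=n_i$, which the paper leaves implicit.
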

\begin{proof}
If there is such a factorization, then Lemma~\ref{lem:pf.factorization}
applies.
Conversely, if we have such a (pre-standard admissible) transformation
for a zero block of size $k_1 \times k_2$
we obtain an ALS in \emph{block} form ($p$ is the first entry of $s_{\block{1}}$)
\begin{displaymath}
\begin{bmatrix}
A_{1,1} & A_{1,2} & . \\
. & 1 & A_{2,3} \\ 
. & . & A_{3,3}
\end{bmatrix}
s =
\begin{bmatrix}
. \\ . \\ v_{\block{3}}
\end{bmatrix},
\quad
s = 
\begin{bmatrix}
s_{\block{1}} \\ g \\ s_{\block{3}}
\end{bmatrix},
\end{displaymath}
with
$A_{1,1}$ of size $k_1 \times k_1$
and $A_{3,3}$ of size $k_2 \times k_2$, 
in which we duplicate the entry $s_{k_1+1}$ by inserting
a ``dummy'' row (and column) to get the following ALS of
size $k_1 +k_2 +2 = n + 1$:
\begin{displaymath}
\begin{bmatrix}
A_{1,1} & A_{1,2} & 0  & . \\
0 & 1 & -1 & 0 \\
. & . & 1 & A_{2,3} \\ 
. & . & 0 & A_{3,3}
\end{bmatrix}
s' =
\begin{bmatrix}
. \\ . \\ . \\ v_{\block{3}}
\end{bmatrix},
\quad
s' = 
\begin{bmatrix}
s_{\block{1}} \\ g \\ g \\ s_{\block{3}}
\end{bmatrix}.
\end{displaymath}
According to the construction of the multiplication
in Proposition~\ref{pro:pf.ratop} we have $p = f g$
in the first component of $s_{\block{1}}$,
the first block in $s'$, for $f,g \in \freeALG{\field{K}}{X}$
given by the (pre-standard) admissible linear systems
\begin{displaymath}
\begin{bmatrix}
A_{1,1} & A_{1,2} \\
. & 1
\end{bmatrix}
s_f = 
\begin{bmatrix}
. \\ 1
\end{bmatrix}
\quad\text{and}\quad
\begin{bmatrix}
1 & A_{2,3} \\
. & A_{3,3} 
\end{bmatrix}
s_g =
\begin{bmatrix}
. \\ v_{\block{3}}
\end{bmatrix}
\end{displaymath}
of dimension $n_1 = k_1 + 1$ and $n_2 = k_2+1$ respectively.
\end{proof}

This finishes the algorithm. A simple analogue of 
\cite[Theorem~4.1]{Cohn1999a}
\ will do the rest, we do not have to worry about
invertibility of the transformation matrices $P$ and $Q$
in~\eqref{eqn:pf.factrn}.
In our case
$\field{K}[\alpha,\beta] = \field{K}[
\alpha_{1,2},\ldots,\alpha_{1,n-1},\alpha_{2,3},\ldots,\alpha_{2,n-1},\ldots,\alpha_{n-2,n-1},$
$\beta_{2,3},\ldots,\beta_{2,n},$
$\beta_{3,4},\ldots,\beta_{3,n},\ldots,\beta_{n-1,n} ]$.
However, a non-trivial ideal does \emph{not} guarantee
a solution over $\field{K}$ although
there are solutions over $\aclo{\field{K}}$.
To test only (without computing it)
\emph{if} there is a solution (over $\field{K}$)
one can use the concept of \emph{resultants}.
An introduction can be found in
\cite[Section~3.6]{Cox2015a}
. This book contains also an introduction
to Gröbner bases and an overview about
computer algebra software for computing them.
Additional to the work of 
\cite{Buchberger1970a}
, the survey on Gröbner--Shirshov bases
of \cite{Bokut2000a}
\ could be consulted.

\begin{remark}
Note, that in general in order to reverse the multiplication
(Proposition~\ref{pro:pf.minmul})
to find factors, we also need a lower left block of zeros of appropriate size.
This important fact is hidden in the pre-standard form of an ALS.
A general factorization concept is considered in future work.
\end{remark}

\begin{proposition}\label{pro:pf.ideal}
Let $p\in \freeALG{\field{K}}{X}$ be given by the
minimal pre-standard admissible linear system $\als{A} = (1,A,\lambda)$
of dimension $n = \rank p \ge 3$ and let $(P,Q)$ as in~\eqref{eqn:pf.factrn}.
Fix a $k \in \{ 1, 2, \ldots, n-2 \}$
and denote by $I_k$ the ideal
of $\field{K}[\alpha,\beta]$
generated by the coefficients of each $x\in \{ 1 \} \cup X$ in the
$(i,j)$ entries of the matrix $PAQ$ for $1 \le i \le k$ and
$k+2 \le j \le n$. Then $p$ factorizes
\emph{over} $\freeALG{\aclo{\field{K}}}{X}$
into $p=q_1 q_2$ with $\rank q_1 = k+1$ and $\rank q_2 = n-k$
if and only if the ideal $I_k$ is non-trivial.
\end{proposition}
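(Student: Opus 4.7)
The plan is to combine Theorem~\ref{thr:pf.factorization} with the weak form of Hilbert's Nullstellensatz. The bookkeeping observation is that each relevant entry $(PAQ)_{ij}$, for $1 \le i \le k$ and $k+2 \le j \le n$, is a $\field{K}[\alpha,\beta]$-linear combination of $1$ and the letters of $X$, whose scalar coefficients are by definition the generators of $I_k$. Hence the upper-right $k \times (n-k-1)$ block of $PAQ$ vanishes identically in $X$ precisely at those points $(\alpha^*,\beta^*) \in \aclo{\field{K}}^N$ where every generator of $I_k$ simultaneously vanishes.

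For the ($\Leftarrow$) direction, assume $I_k \neq \field{K}[\alpha,\beta]$. The weak Nullstellensatz produces a common zero $(\alpha^*,\beta^*) \in \aclo{\field{K}}^N$. Substituting these values into~\eqref{eqn:pf.factrn} gives matrices $P^*,Q^*$ over $\aclo{\field{K}}$ that are upper triangular with unit diagonal, hence invertible; the last column of $P^*$ preserves $v = [0,\ldots,0,\lambda]\trp$ and the first row of $Q^*$ is $e_1$, so $(P^*,Q^*)$ is pre-standard admissible for $\als{A}$ viewed over $\aclo{\field{K}}$. By construction $P^* A Q^*$ carries the required zero block. Before invoking Theorem~\ref{thr:pf.factorization} over $\aclo{\field{K}}$ one checks that $\als{A}$ stays minimal under base change: $\rank p$ coincides with the (base-field-free) Hankel rank, and $\field{K}$-linear independence of the left/right families persists under extension of scalars, so Proposition~\ref{pro:pf.cohn94.47} still applies. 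This yields the desired factorization $p = q_1 q_2$ in $\freeALG{\aclo{\field{K}}}{X}$ with $\rank q_1 = k+1$ and $\rank q_2 = n - k$.

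For the ($\Rightarrow$) direction, suppose such a factorization exists over $\aclo{\field{K}}$. Lemma~\ref{lem:pf.factorization} applied there provides a pre-standard admissible transformation $(P_0,Q_0)$ for which $P_0 A Q_0$ has the requested $k \times (n-k-1)$ zero block. A normalization step—absorbing the (nonzero) diagonals of $P_0$ and $Q_0$ into left- and right-diagonal scalings, and using admissibility to pin the first row of the normalized $Q$ to $e_1$—brings the transformation into the exact form~\eqref{eqn:pf.factrn}. Since diagonal multiplications do not change which entries of a matrix are zero, the zero block is preserved. The resulting values of $\alpha_{ij},\beta_{ij} \in \aclo{\field{K}}$ then constitute a common zero of all generators of $I_k$, forcing $I_k$ to be a proper ideal.

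The main obstacle is this normalization in the ($\Rightarrow$) direction: one has to argue that every pre-standard admissible transformation achieving the zero block can be rescaled to the precise parameterization~\eqref{eqn:pf.factrn} without disturbing the block structure, and that this rescaling is compatible with admissibility. Once that bookkeeping is discharged, both implications reduce to a mechanical translation of Theorem~\ref{thr:pf.factorization} through the weak Nullstellensatz.
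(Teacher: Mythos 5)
The paper itself supplies no proof of Proposition~\ref{pro:pf.ideal}; it only remarks that ``a simple analogue of [Cohn--Reutenauer, Theorem~4.1] will do the rest''. Your architecture---push Theorem~\ref{thr:pf.factorization} through the weak Nullstellensatz---is exactly what that remark intends. Your ($\Leftarrow$) direction is sound: a common zero of the generators of $I_k$ yields matrices of the shape~\eqref{eqn:pf.factrn}, which are automatically a pre-standard admissible transformation (unitriangular product, last column of $P$ equal to $e_n$ so $Pv=v$, first row of $Q$ equal to $e_1$), and your check that minimality of $\als{A}$ survives base change to $\aclo{\field{K}}$ (linear independence of the left and right families is a rank condition on coefficient matrices over $\field{K}$, hence stable under field extension) is precisely what is needed before invoking the theorem over $\aclo{\field{K}}$.

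The genuine gap is in ($\Rightarrow$), at the step you yourself flag as ``the main obstacle'' and then dispose of by ``absorbing the diagonals''. A pre-standard admissible transformation $(P_0,Q_0)$ is \emph{not} required to be triangular: Definitions~\ref{def:pf.als} and~\ref{def:pf.psals} only demand that the \emph{product} $P_0AQ_0$ be upper unitriangular, that the first row of $Q_0$ be $e_1$, and that $P_0v$ retain the form $[0,\dots,0,\lambda']\trp$; the matrices $P_0,Q_0$ produced by Lemma~\ref{lem:pf.factorization} (via Cohn--Reutenauer's uniqueness theorem) carry no triangularity whatsoever, so no diagonal rescaling brings them into the shape~\eqref{eqn:pf.factrn}. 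What actually has to be argued is this: the zero block in rows $1,\dots,k$ and columns $k+2,\dots,n$ depends only on $V=\linsp\{\text{rows }1,\dots,k\text{ of }P_0\}$ and $W=\linsp\{\text{columns }k+2,\dots,n\text{ of }Q_0\}$ through the single condition $VAW=0$; the constraints on $P_0v$ and on the first row of $Q_0$ force $V\subseteq\linsp\{e_1,\dots,e_{n-1}\}$ and $W\subseteq\linsp\{e_2,\dots,e_n\}$; but a pair of the shape~\eqref{eqn:pf.factrn} realizes exactly those $(V,W)$ that are in addition transverse to $\linsp\{e_{k+1},\dots,e_n\}$ and to $\linsp\{e_1,\dots,e_{k+1}\}$ respectively. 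Showing that transverse subspaces $V,W$ with $VAW=0$ exist whenever the factorization exists (either by repositioning the given $V,W$ or by a more careful construction from the minimal multiplication ALS) is the real content of the ``analogue of Theorem~4.1'' and is absent from your argument; until it is supplied, the ($\Rightarrow$) implication is not proved.
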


Given a polynomial $p\in \freeALG{\field{K}}{X}$
by a minimal pre-standard ALS of dimension $n=\rank p \ge 2$
there are at most $\phi(n) = 2^{n-2}$ (minimal) standard admissible linear
systems (with respect to the structure of the upper right blocks
of zeros). For $n=2$ this is clear. For $n > 2$ the ALS could
be atomic or have a block of zeros of size $1 \times (n-2)$
or $(n-2) \times 1$, thus $\phi(n+1) = 1 + 2 \phi(n)- 1 = 2 \phi(n)$
because the system with ``finest'' upper right structure is
counted twice.

\begin{remark}
Recall that ---up to similarity--- each element $p\in \freeALG{\field{K}}{X}$
has only \emph{one} factorization into atoms.
If one is interested in the number of factorizations
(not necessarily irreducible)
up to permutation (and multiplication of the factors by units)
the above estimate $2^{\rank p-2}$ can be used.
However, the number of factorizations in the sense
of \cite[Definition~3.1]{Bell2017a}
\ can be bigger.
As an example take the polynomial $p = (x-1)(x-2)(x-3)$
which has $3! = 6$ different factorizations while
the number of standard admissible
linear systems is bounded by
$2^{\rank p-2} = 4$.
\end{remark}

\medskip

Let $p$ be a non-zero polynomial with
the factorization $p=q_1 q_2 \cdots q_m$
into atoms $q_i \in \freeALG{\field{K}}{X}$.
Since $\freeALG{\field{K}}{X}$ is a similarity-UFD
(Proposition~\ref{pro:pf.cohn63b}), \emph{each}
factorization of (a non-zero non-unit) $p$ into atoms has $m$ factors.
Therefore one can define the 
\emph{length}\index{length of a polynomial}
of $p$ by $m$, written as $\ell(p) = m$.
For a word $w \in X^* \subseteq \freeALG{\field{K}}{X}$
the length is $\ell(w) = \length{w}$.
By looking at the minimal polynomial multiplication
(Proposition~\ref{pro:pf.minmul})
it is easy to see that the length of an element
$p\in \freeALG{\field{K}}{X}^\bullet$
can be estimated by the rank, namely $\ell(p) \le \rank(p) - 1$.
More on length functions ---and transfer homomorphisms in the
context of non-unique factorizations--- (in the non-commutative setting)
can be found in
\cite[Section~3]{Smertnig2015a}
\ or
\cite{Baeth2015a}
.

\newpage
\section{Generalizing the Companion Matrix}\label{sec:pf.ncm}

For a special case, namely an alphabet with just one letter,
the companion matrix of a polynomial $p(x)$ yields immediately a
\emph{minimal} linear representation of $p \in \freeALG{\field{K}}{\{x\}}$.
If this is the characteristic polynomial of some (square) matrix
$B \in \field{K}^{m \times m}$,
then its eigenvalues can be computed by the techniques from
Section~\ref{sec:pf.fnp} (if necessary going over to $\aclo{\field{K}}$),
illustrated in Example~\ref{ex:pf.eig}.

For a broader class of (nc) polynomials,
\emph{left} and \emph{right} companion systems can be defined.
In general \emph{minimal} pre-standard admissible linear systems
are necessary to generalize companion matrices,
see Definition~\ref{def:pf.cm}.

\begin{definition}[Companion Matrix, Characteristic Polynomial, Normal Form
\protect{\cite[Section~6.6]{Gantmacher1986a}
}]
Let $p(x) = a_0 + a_1 x + \ldots + a_{m-1} x^{m-1} + x^m \in \field{K}[x]$.
The \emph{companion matrix} $L(p)$ is defined as
\begin{displaymath}
L(p) =
\begin{bmatrix}
0 & 0 & \ldots & 0 & -a_0 \\
1 & 0 & \ddots & \vdots & -a_1 \\
  & \ddots & \ddots & 0 & \vdots \\
  &  & 1 & 0 & -a_{m-2} \\
  &  &   & 1 & -a_{m-1}
\end{bmatrix}.
\end{displaymath}
Then $p(x)$ is the \emph{characteristic polynomial}\index{characteristic polynomial}
of $L = L(p)$:
\begin{displaymath}
\det(xI - L) = \det
\begin{bmatrix}
x & 0 & \ldots & 0 & a_0 \\
-1 & x & \ddots & \vdots & a_1 \\
  & \ddots & \ddots & 0 & \vdots \\
  &  & -1 & x & a_{m-2} \\
  &  &   & -1 & x+a_{m-1}
\end{bmatrix}.
\end{displaymath}
Given a square matrix $M \in \field{K}^{m \times m}$,
the \emph{normal form}%
\index{normal form (of a matrix)}
for $M$ can be defined in terms of the
\emph{companion matrix} $L(M)$ of its 
\emph{characteristic polynomial} $p(M) = \det (xI - M)$.
\end{definition}

\begin{remark}
In \cite[Section~8.1]{Cohn1995a}
, $C(p) = xI - L(p)\trp$ is also called \emph{companion matrix}.
This is justified by viewing $C(p)$ as \emph{linear matrix pencil}
$C(p) = C_0 \otimes 1 + C_x \otimes x$.
It generalizes nicely for non-commutative polynomials.
\end{remark}

Now we leave $\field{K}[x] = \freeALG{\field{K}}{\{x\}}$
and consider (nc) polynomials $p \in \freeALG{\field{K}}{X}$.
There are two cases where a \emph{minimal} ALS can be
stated immediately, namely if the support (of the polynomial)
can be ``built'' from the left (in the left family) or
from the right (in the right family)
with \emph{strictly} increasing rank.
For example,
a \emph{minimal} pre-standard ALS for
$p = a_0  + a_1 (x+2y) + a_2 (x-z)(x+2y) + y(x-z)(x+2y)$
is given by
\begin{displaymath}
\begin{bmatrix}
1 & -y -a_2 & -a_1 & -a_0 \\
. & 1 & -(x-z) & . \\
. & . & 1 & -(x+2y) \\
. & . & . & 1
\end{bmatrix}
s =
\begin{bmatrix}
. \\ . \\ . \\ 1
\end{bmatrix},
\quad s =
\begin{bmatrix}
p \\
(x-z)(x+2y) \\
x+2y \\
1
\end{bmatrix}.
\end{displaymath}

\begin{definition}[Left and Right Companion System]\label{def:pf.cs}
For $i=1,2,\ldots, m$ let $q_i \in \freeALG{\field{K}}{X}$
with $\rank{q_i} = 2$ and $a_i \in \field{K}$.
For a polynomial $p \in \freeALG{\field{K}}{X}$
of the form
\begin{displaymath}
p = q_m q_{m-1} \cdots q_1 + a_{m-1} q_{m-1} \cdots q_1 + \ldots + a_2 q_2 q_1
    + a_1 q_1 + a_0
\end{displaymath}
the pre-standard ALS
\begin{equation}\label{eqn:pf.lcs}
\begin{bmatrix}
1 & -q_m -a_{m-1} & -a_{m-2} & \ldots & -a_1 & -a_0 \\
  & 1 & -q_{m-1} & 0 & \ldots & 0 \\
  &   & \ddots & \ddots & \ddots & \vdots \\
  &   &   &  1 & -q_2 & 0 \\
  &   &   &  & 1 & -q_1 \\
  &   &   &  & & 1
\end{bmatrix}
s = 
\begin{bmatrix}
0 \\ 0 \\ \vdots \\ 0 \\ 0 \\ 1
\end{bmatrix}
\end{equation}
is called \emph{left companion system}\index{left companion system}.
And for a polynomial $p \in \freeALG{\field{K}}{X}$ of the form
\begin{displaymath}
p = a_0 + a_1 q_1 + a_2 q_1 q_2 + \ldots + a_{m-1} q_1 q_2 \cdots q_{m-1}
    + q_1 q_2 \cdots q_m
\end{displaymath}
the pre-standard ALS
\begin{equation}\label{eqn:pf.rcs}
\begin{bmatrix}
1 & -q_1 & 0  & \ldots & 0 & -a_0 \\
  & 1 & -q_2 & \ddots & \vdots & -a_1 \\
  &   & \ddots & \ddots & 0 & \vdots \\
  &   &   &  1 & -q_{m-1} & -a_{m-2} \\
  &   &   &  & 1 & -q_m -a_{m-1} \\
  &   &   &  & & 1
\end{bmatrix}
s = 
\begin{bmatrix}
0 \\ 0 \\ \vdots \\ 0 \\ 0 \\ 1
\end{bmatrix}
\end{equation}
is called \emph{right companion system}\index{right companion system}.
\end{definition}

\begin{proposition}\label{pro:pf.minsys}
For $i=1,2,\ldots, m$ let $q_i \in \freeALG{\field{K}}{X}$
with $\rank{q_i} = 2$. Then the polynomials
$p_{\text{l}} = q_m q_{m-1} \cdots q_1 + a_{m-1} q_{m-1} \cdots q_1
      + \ldots + a_2 q_2 q_1 + a_1 q_1 + a_0$
and
$p_{\text{r}} = a_0 + a_1 q_1 + a_2 q_1 q_2 
      + \ldots + a_{m-1} q_1 q_2 \cdots q_{m-1} + q_1 q_2 \cdots q_m$
have rank $m+1$.
\end{proposition}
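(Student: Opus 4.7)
The plan is to show that the left companion system \eqref{eqn:pf.lcs} is a \emph{minimal} pre-standard admissible linear system for $p_{\text{l}}$ of dimension $m+1$, and likewise for \eqref{eqn:pf.rcs} and $p_{\text{r}}$; the rank statement is then immediate from Definition~\ref{def:pf.rep}. By Proposition~\ref{pro:pf.cohn94.47}, minimality reduces to verifying $\field{K}$-linear independence of both the left family $s = A\inv v$ and the right family $t = u A\inv$.

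First I would pin down the hypothesis: $\rank q_i = 2$ means that $q_i$ admits a minimal pre-standard ALS of dimension $2$, whose sole non-trivial superdiagonal entry is affine (because it comes from a linear matrix pencil) and non-scalar (by minimality of the left family $(q_i, 1)$). Hence $q_i$ has degree exactly $1$, the entries of \eqref{eqn:pf.lcs} and \eqref{eqn:pf.rcs} really are affine, and --- using that $\freeALG{\field{K}}{X}$ is a domain --- every product $q_{j_1} q_{j_2} \cdots q_{j_k}$ has total degree exactly $k$.

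Next I would carry out a back-substitution in the triangular system \eqref{eqn:pf.lcs}. Starting from $s_{m+1} = 1$ and using $s_i = q_{m-i+1}\, s_{i+1}$ for $2 \le i \le m$, one obtains
\begin{displaymath}
s = \bigl(\, p_{\text{l}},\ q_{m-1}\cdots q_1,\ q_{m-2}\cdots q_1,\ \ldots,\ q_2 q_1,\ q_1,\ 1 \,\bigr)\trp,
\end{displaymath}
with $\deg s_i = m+1-i$. A symmetric forward sweep for $t = u A\inv$ yields the recursion $t_1 = 1$, $t_{j+1} = t_j\, q_{m-j+1} + a_{m-j}$, giving $\deg t_j = j-1$ for $j = 1, \ldots, m+1$. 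Since in both families the entries have pairwise distinct total degrees they are $\field{K}$-linearly independent, hence \eqref{eqn:pf.lcs} is minimal and $\rank p_{\text{l}} = m+1$.

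The treatment of $p_{\text{r}}$ via \eqref{eqn:pf.rcs} is entirely symmetric (the roles of left and right families essentially swap): back-substitution gives $s_i = q_i q_{i+1} \cdots q_m + (\text{lower-degree terms})$ with $\deg s_i = m-i+1$, and forward substitution gives $t = (1,\, q_1,\, q_1 q_2,\, \ldots,\, q_1 \cdots q_{m-1},\, p_{\text{r}})$ with $\deg t_j = j-1$. The main (and rather mild) obstacle throughout is justifying non-vanishing of the leading homogeneous parts of these products, which is handled once by invoking that $\freeALG{\field{K}}{X}$ is a domain; the remainder of the argument is just careful bookkeeping on a triangular linear pencil.
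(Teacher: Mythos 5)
Your proposal is correct and follows essentially the same route as the paper: exhibit the left and right families of the companion systems \eqref{eqn:pf.lcs} and \eqref{eqn:pf.rcs}, verify their $\field{K}$-linear independence, and invoke Proposition~\ref{pro:pf.cohn94.47} to get minimality and hence the rank. The only difference is that you make explicit the degree bookkeeping (each $q_i$ has degree exactly $1$, so the family entries have pairwise distinct total degrees) that the paper's proof leaves as an unjustified assertion of independence.
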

\begin{proof}
Both, the left family $( p, q_{m-1} \cdots q_1, \ldots, q_2 q_1 , q_1, 1 )$
and the right family $( 1, q_m + a_{m-1},$ $(q_m+a_{m-1})q_{m-1} + a_{m-2}, \ldots, p)$
for \eqref{eqn:pf.lcs} are $\field{K}$-linearly independent.
Thus, the left companion system (for $p_{\text{l}}$) is minimal
of dimension $m+1$. Hence $\rank p_{\text{l}} = m+1$.
By a similar argument for the right companion system we
get $\rank p_{\text{r}} = m+1$.
\end{proof}

For a general polynomial $p\in \freeALG{\field{K}}{X}$ with $\rank p = n \ge 2$
we can take any \emph{minimal} pre-standard ALS $\als{A} = (1,A,\lambda)$
to obtain an ALS of the form $(1,A',1)$ by 
dividing the last row by $\lambda$ and multiplying the last column by $\lambda$.
Now we can define a (generalized version of the) companion matrix.
Those companion matrices can be used as building blocks to get
companion matrices for products of polynomials.
This is just a different point of view on the ``minimal'' polynomial
multiplication from Proposition~\ref{pro:pf.minmul}.

\begin{remark}
Although nothing can be said in general about minimality
of a linear representation for \emph{commutative} polynomials
(in several variables), Proposition~\ref{pro:pf.minsys}
can be used for constructing minimal linear representations
in the commutative case. Because in this case, the rank is
the maximum of the ranks of the monomials.
For example $p = x^2 y + xyz = xyx + xyz = xy(x+z)$.
\end{remark}

\begin{definition}[Companion Matrices]\label{def:pf.cm}
Let $p \in \freeALG{\field{K}}{X}$ with $\rank p = n \ge 2$ be given
by a \emph{minimal} pre-standard admissible linear system
$\als{A} = (1,A,1)$ and denote by $C(p)$ the upper right
submatrix of size $(n-1) \times (n-1)$.
Then $C(p)$ is called a (nc) \emph{companion matrix}\index{companion matrix}
of $p$.
\end{definition}

\begin{Example}\label{ex:pf.eig}
Let 
\begin{displaymath}
B =
\begin{bmatrix}
6 & 1 & 3 \\
-7 & 3 & 14 \\
1 & 0 & 1
\end{bmatrix}.
\end{displaymath}
Then the characteristic polynomial of $B$ is
$p(x) = \det(xI - B) = x^3 - 10 x^2 + 31 x - 30$.
The left companion system of $p$ is
\begin{displaymath}
\begin{bmatrix}
1 & -x+10 & -31 & 30 \\
. & 1 & -x & . \\
. & . & 1 & -x \\
. & . & . & 1
\end{bmatrix}
s =
\begin{bmatrix}
. \\ . \\ . \\ 1
\end{bmatrix},
\quad
s =
\begin{bmatrix}
p(x) \\
x^2 \\
x \\
1
\end{bmatrix}.
\end{displaymath}
Applying the transformation $(P,Q)$ with
\begin{displaymath}
P = 
\begin{bmatrix}
1 & . & . & . \\
  & 1 & -3 & . \\
  &   &  1 & . \\
  &   &    & 1
\end{bmatrix}
\begin{bmatrix}
1 & -5 & 6 & . \\
  & 1 & . & . \\
  &   &  1 & . \\
  &   &    & 1
\end{bmatrix}
=
\begin{bmatrix}
1 & -5 & 6 & . \\
  & 1 & -3 & . \\
  &   &  1 & . \\
  &   &    & 1
\end{bmatrix}
\end{displaymath}
and
\begin{displaymath}
Q = 
\begin{bmatrix}
1 & . & . & . \\
  & 1 & 5  & . \\
  &   &  1 & \frac{6}{5} \\
  &   &    & 1
\end{bmatrix}
\begin{bmatrix}
1 & . & . & . \\
  & 1 & . & . \\
  &   &  1 & \frac{9}{5} \\
  &   &    & 1
\end{bmatrix}
=
\begin{bmatrix}
1 & . & . & . \\
  & 1 & 5 & 9 \\
  &   &  1 & 3 \\
  &   &    & 1
\end{bmatrix}
\end{displaymath}
we get the ALS
\begin{displaymath}
\begin{bmatrix}
1 & 5-x & . & . \\
. & 1 & 2-x & . \\
. & . & 1 & 3-x \\
. & . & . & 1
\end{bmatrix}
s =
\begin{bmatrix}
. \\ . \\ . \\ 1
\end{bmatrix},
\quad
s =
\begin{bmatrix}
p(x) \\
(x-2)(x-3) \\
x-3 \\
1
\end{bmatrix}.
\end{displaymath}
Thus the eigenvalues of $B$ are $2$, $3$ and $5$.
Compare with Example~\ref{ex:pf.irred} for the case when
the polynomial does not decompose in linear factors.
\end{Example}

\newpage
\section{An Example (step by step)}\label{sec:pf.ex}

Let $p = x(1-yx)(3-yx)$ and $q = (xy-1)(xy-3)x$ be given.
Taking companion systems (Definition~\ref{def:pf.cs}) for
their factors respectively,
by Proposition~\ref{pro:pf.minmul}
we get the \emph{minimal} ALS (for $p$):
\begin{equation}\label{eqn:pf.ex.1}
\begin{bmatrix}
1 & - x & . & . & . & . \\
. & 1 & -y & -1 & . & . \\
. & . & 1 & x & . & . \\
. & . & . & 1 & -y & -3 \\
. & . & . & . & 1 & x \\
. & . & . & . & . & 1
\end{bmatrix}
s =
\begin{bmatrix}
. \\ . \\ . \\ . \\ . \\ 1
\end{bmatrix},
\quad
s =
\begin{bmatrix}
x(1-yx)(3-yx) \\ (1-yx)(3-yx) \\
-x(3-yx) \\ 3-yx \\ -x \\ 1
\end{bmatrix}
\end{equation}
Clearly, $p = xyxyx - 4xyx + 3x = q$
and $\rank p = \rank q = 6$.

\subsection{Constructing a minimal ALS}

If a minimal ALS (for some polynomial) cannot be stated directly by
a left or right companion system (Definition~\ref{def:pf.cs}),
an ALS for $p$
can be constructed using rational operations from
Proposition~\ref{pro:pf.ratop} (except the inverse)
out of monomials (or polynomials).
If the involved systems are pre-standard,
then the resulting system for the sum can be easily transformed
into a pre-standard ALS by row operations
and we can apply Algorithm~\ref{alg:pf.minals} to
obtain a minimal one.

Here we describe an alternative approach which works
directly on the left and right families like
\cite{Cardon1980a}
. It preserves the upper triangular form of the system
matrix. However, for an alphabet with more than one letter,
it is far from optimal because then the
dimension of the vector spaces (see below) might
grow exponentially. We leave it to the reader to
use the ideas of \cite{Cardon1980a}
\ to improve that significantely.

Recall that an ALS $\als{A} = (u,A,v)$ is minimal if and only if both the
left family $s = A\inv v$ is $\field{K}$-linearly independent and the
right family $t = u A\inv$ is $\field{K}$-linearly independent
(Proposition~\ref{pro:pf.cohn94.47}).
Let $f = 3x - 4xyx$. An ALS for $f$ of dimension $n=6$ is given by
\begin{displaymath}
\begin{bmatrix}
1 & - x & . & . & -1 & . \\
. & 1 & - y & . & . & . \\
. & . & 1 & -x & . & . \\
. & . & . & 1 & . & . \\
. & . & . & . & 1 & -x \\
. & . & . & . & . & 1
\end{bmatrix}
s =
\begin{bmatrix}
. \\ . \\ . \\ -4 \\ . \\ 3
\end{bmatrix},
\quad s =
\begin{bmatrix}
3x - 4xyx \\ -4yx \\ -4x \\ -4 \\ 3x \\ 3
\end{bmatrix}.
\end{displaymath}
Note that the free associative algebra $\freeALG{\field{K}}{X}$
is a \emph{vector space} with basis $X^*$,
the free monoid of the alphabet $X$.
Here $X^* = \{ 1, x, y, xx, xy, yx, yy, \ldots \}$.
Therefore, 
we can write the left family $s$ as a matrix (of coordinate row vectors)
$S \in \field{K}^{n \times m_s}$
with column indices $\{ 1, x, yx, xyx \}$
and the right family $t$ as a matrix (of coordinate column vectors)
$T \in \field{K}^{m_t \times n}$
with row indices $\{ 1, x, xy, xyx \}$, that is,
\begin{displaymath}
S = 
\begin{bmatrix}
. & 3 & . & -1 \\
. & . & -4 & . \\
. & -4 & . & . \\
-4 & . & . & . \\
. & 3 & . & . \\
3 & . & . & .
\end{bmatrix}
\quad\text{and}\quad
T = 
\begin{bmatrix}
1 & . & . & . & 1 & . \\
. & 1 & . & . & . & 1 \\
. & . & 1 & . & . & . \\
. & . & . & 1 & . & .
\end{bmatrix}.
\end{displaymath}
Both, $S$ and $T$, have rank~$4 < n = 6$, hence $\als{A}$ cannot be
minimal. The entries of the left (respectively right) family are just rows in $S$,
also denoted by $s_i$ for $i =1,\ldots,n$
(respectively columns in $T$, denoted by $t_j$ for $j=1,\ldots,n$).

Adding a row $s_k$ to $s_i$ results in \emph{subtracting} column~$i$
from column~$k$ in the system matrix $A$. In order to keep the
triangular structure $i < k$ must hold. Similarly,
adding a column $t_k$ to $t_j$ results in \emph{subtracting}
row~$j$ to row~$k$, hence $j > k$ must hold.
If we want to construct zeros in row~$i=3$ (we cannot produce zeros in
rows~1 and~2) in $S$ we need to find
an (invertible) transformation matrix $Q\in \field{K}^{n \times n}$
of the form (note that we will apply $Q\inv$ from the left)
\begin{displaymath}
Q =
\begin{bmatrix}
1 & . & . & . & . & . \\
  & 1 & . & . & . & . \\ 
  &   & 1 & \beta_4 & \beta_5 & \beta_6 \\
  &   &   & 1 & . & . \\
  &   &   &   & 1 & . \\
  &   &   &   &   & 1
\end{bmatrix}
\end{displaymath}
by solving the (underdetermined) linear system with
the system matrix consisting of the rows $i+1,i+2,\ldots,n$
and the right hand side consisting of the row~$i$
of~$S$:
\begin{displaymath}
\begin{bmatrix}
\beta_4 & \beta_5 & \beta_6
\end{bmatrix}
\begin{bmatrix}
-4 & . & . & . \\
. & 3 & . & . \\
3 & . & . & .
\end{bmatrix}
=
\begin{bmatrix}
. & -4 & . & . \\
\end{bmatrix}.
\end{displaymath}
Here $\beta_5 = -4/3$ and we choose $\beta_4 = \beta_6 = 0$.
Now row~3 in $Q\inv S$ is zero
\begin{displaymath}
Q\inv S =
\begin{bmatrix}
1 & . & . & . & . & . \\
  & 1 & . & . & . & . \\ 
  &   & 1 & 0 & 4/3 & 0 \\
  &   &   & 1 & . & . \\
  &   &   &   & 1 & . \\
  &   &   &   &   & 1
\end{bmatrix}
\begin{bmatrix}
. & 3 & . & -1 \\
. & . & -4 & . \\
. & -4 & . & . \\
-4 & . & . & . \\
. & 3 & . & . \\
3 & . & . & .
\end{bmatrix}
=
\begin{bmatrix}
. & 3 & . & -1 \\
. & . & -4 & . \\
. & 0 & . & . \\
-4 & . & . & . \\
. & 3 & . & . \\
3 & . & . & .
\end{bmatrix},
\end{displaymath}
that is $s'_3 = (Q\inv s)_3 = 0$. Therefore we can remove
row~3 and column~3 from the (admissibly) modified system
$(uQ, AQ, v)$ and get one of dimension $n-1=5$:
\begin{displaymath}
\begin{bmatrix}
1 & - x & . & -1 & . \\
. & 1 & . & \frac{4}{3} y & . \\
. & . & 1 & . & . \\
. & . & . & 1 & -x \\
. & . & . & . & 1
\end{bmatrix}
s =
\begin{bmatrix}
. \\ . \\ -4 \\ . \\ 3
\end{bmatrix},
\quad s =
\begin{bmatrix}
3x - 4xyx \\ -4yx \\ -4 \\ 3x \\ 3
\end{bmatrix}.
\end{displaymath}
The right family of the new ALS is $\bigl(1,x,0,1-\frac{4}{3} xy, x-\frac{4}{3} xyx\bigr)$.
After adding $4/3$-times row~5 to row~3 we can remove row~3 and column~3
and get a \emph{minimal} ALS for $3x - 4xyx = (1-\frac{4}{3} xy) 3x$:
\begin{displaymath}
\begin{bmatrix}
1 & -x & -1 & . \\
. & 1 & \frac{4}{3} y & . \\
. & . & 1 & -x \\
. & . & . & 1
\end{bmatrix}
s =
\begin{bmatrix}
. \\ . \\ . \\ 3
\end{bmatrix},
\quad
s = 
\begin{bmatrix}
3x - 4xyx \\
-4yx \\
3x \\
3
\end{bmatrix}.
\end{displaymath}

\subsection{Factorizing a Polynomial}

Now we consider the following \emph{minimal} ALS for $p = xyxyx + (3x - 4xyx)$,
constructed in a similar way as shown in the previous subsection 
or using the right companion system (Definition~\ref{def:pf.cs}):
\begin{displaymath}
\begin{bmatrix}
1 & - x & . & . & . & -x \\
. & 1 & -y & . & . & . \\
. & . & 1 & -x & . & \frac{4}{3} x \\
. & . & . & 1 & -y & . \\
. & . & . & . & 1 & -\frac{1}{3} x \\
. & . & . & . & . & 1
\end{bmatrix}
s =
\begin{bmatrix}
. \\ . \\ . \\ . \\ . \\ 3
\end{bmatrix}.
\end{displaymath}
We try to create an upper right block of zeros of size $3 \times 2$.
For that we apply the (admissible) transformation $(P,Q)$ directly
to the coefficient matrices $A_0$, $A_x$ and $A_y$ in
$A = A_0 \otimes 1 + A_x \otimes x + A_y \otimes y$
to get the equations. For $y$ we have
\begin{align*}
P A_y Q &=
P
\begin{bmatrix}
. & . & . & . & . & . \\
  & . & -1 & . & . & . \\
  &   & . & . & . & . \\
  &   &   & . & -1 & . \\
  &   &   &   & . & . \\
  &   &   &   &   & . 
\end{bmatrix}
\begin{bmatrix}
1 & 0 & 0 & 0 & 0 & 0 \\
  & 1 & \beta_{2,3} & \beta_{2,4} & \beta_{2,5} & \beta_{2,6} \\
  &   & 1 & \beta_{3,4} & \beta_{3,5} & \beta_{3,6} \\
  &   &   & 1 & \beta_{4,5} & \beta_{4,6} \\
  &   &   &   & 1 & \beta_{5,6} \\
  &   &   &   &   & 1
\end{bmatrix}
\end{align*}
\begin{align*}
\quad\quad\quad
&=
\begin{bmatrix}
1 & \alpha_{1,2} & \alpha_{1,3} & \alpha_{1,4} & \alpha_{1,5} & 0 \\
  & 1 & \alpha_{2,3} & \alpha_{2,4} & \alpha_{2,5} & 0 \\
  &   & 1 & \alpha_{3,4} & \alpha_{3,5} & 0 \\
  &   &   & 1 & \alpha_{4,5} & 0 \\
  &   &   &   & 1 & 0 \\
  &   &   &   &   & 1
\end{bmatrix}
\begin{bmatrix}
. & . & . & . & . & . \\
  & . & -1 & -\beta_{3,4} & -\beta_{3,5} & -\beta_{3,6} \\
  &   & . & . & . & . \\
  &   &   & . & -1 & -\beta_{5,6} \\
  &   &   &   & . & . \\
  &   &   &   &   & . 
\end{bmatrix} \\
&= 
\begin{bmatrix}
. & . & -\alpha_{1,2} & -\alpha_{1,2}\beta_{3,4} &
      -\alpha_{1,2}\beta_{3,5} - \alpha_{1,4} &
      -\alpha_{1,4}\beta_{5,6} - \alpha_{1,2} \beta_{3,6} \\
  & . & -1 & -\beta_{3,4} & -\beta_{3,5} - \alpha_{2,4} &
      -\alpha_{2,4}\beta_{5,6} - \beta_{3,6} \\
  &   & . & . & -\alpha_{3,4} & -\alpha_{3,4}\beta_{5,6} \\
  &   &   & . & -1 & -\beta_{5,6} \\
  &   &   &   & . & . \\
  &   &   &   &   & . 
\end{bmatrix}
\end{align*}
of which we pick the upper right $3 \times 2$ block. Thus we
have the following~6 equations for $y$:
\begin{displaymath}
\begin{bmatrix}
\alpha_{1,2}\beta_{3,5} + \alpha_{1,4} &
      \alpha_{1,4}\beta_{5,6} + \alpha_{1,2} \beta_{3,6} \\
\beta_{3,5} + \alpha_{2,4} & 
      \alpha_{2,4}\beta_{5,6} + \beta_{3,6} \\
\alpha_{3,4} & \alpha_{3,4}\beta_{5,6}
\end{bmatrix}
=
\begin{bmatrix}
0 & 0 \\
0 & 0 \\
0 & 0 
\end{bmatrix}.
\end{displaymath}
Similarly, we get the following 6 equations for $x$:
\begin{displaymath}
\begin{bmatrix}
\alpha_{1,3} \beta_{4,5} + \beta_{2,5} &
  \alpha_{1,3} \beta_{4,6} + \beta_{2,6} + \frac{1}{3} \alpha_{1,5} - \frac{4}{3} \alpha_{1,3} + 1 \\
\alpha_{2,3} \beta_{4,5} &
  \alpha_{2,3} \beta_{4,6} + \frac{1}{3} \alpha_{2,5} - \frac{4}{3} \alpha_{2,3} \\
\beta_{4,5} & \beta_{4,6} + \frac{1}{3} a_{3,5} - \frac{4}{3} 
\end{bmatrix}
= 0.
\end{displaymath}
And finally those for $1$ (without terms containing
$\alpha_{3,4} = \beta_{4,5} = 0$):
\begin{displaymath}
\begin{bmatrix}
\alpha_{1,3} \beta_{3,5} + \alpha_{1,2}\beta_{2,5} + \alpha_{1,5} &
  \alpha_{1,5} \beta_{5,6} + \alpha_{1,4} \beta_{4,6} +
  \alpha_{1,3} \beta_{3,6} + \alpha_{1,2} \beta_{2,6} \\
\alpha_{2,3} \beta_{3,5} + \beta_{2,5} + \alpha_{2,5} &
  \alpha_{2,5} \beta_{5,6} + \alpha_{2,4} \beta_{4,6} +
  \alpha_{2,3} \beta_{3,6} + \beta_{2,6} \\
\beta_{3,5} + \alpha_{3,5} &
  \alpha_{3,5} \beta_{5,6} + \beta_{3,6}
\end{bmatrix}
= 0.
\end{displaymath}

\medskip
A Gröbner basis for the ideal generated by these 18 equations 
(computed by \textsc{FriCAS},
\cite{FRICAS2016}
\ using \emph{lexicographic order})
is $\bigl(
\alpha_{1,2} - \alpha_{1,4}\beta_{4,6},\;
\alpha_{1,3} - \alpha_{1,5} \beta_{4,6},\;
\alpha_{2,3} - \alpha_{2,5} \beta_{4,6},\;
\alpha_{2,4} + 3 \beta_{4,6} - 4,\;
\alpha_{3,4},\;
\alpha_{3,5} + 3\beta_{4,6} - 4,\;
\beta_{2,5},\;
\beta_{2,6} + 1,\;
\beta_{3,5} - 3 \beta_{4,6} + 4,\;
\beta_{3,6} - 3 \beta_{4,6} \beta_{5,6} + 4 \beta_{5,6},\;
\beta_{4,5},\;
\beta_{4,6}^2 - \tsfrac{4}{3} \beta_{4,6} + \tsfrac{1}{3} \bigr)$.
The last (over $\field{K}$ reducible)
generator is $(\beta_{4,6} -1)(\beta_{4,6}-\frac{1}{3})$,
thus $\beta_{4,6} \in \{ 1, \frac{1}{3} \}$.
We proceed with the case $\beta_{4,6}=1$.
We then have
$\alpha_{2,4} = \alpha_{3,5} = 1$,
$\beta_{2,6} = \beta_{3,5} = -1$
and choose
$\alpha_{1,2} = \alpha_{1,4} = 0$,
$\alpha_{1,3} = \alpha_{1,5} = 3$,
$\alpha_{2,3} = \alpha_{2,5} = 0$,
$\beta_{3,6} = -\beta_{5,6} = 0$.
Note that $\alpha_{4,5}$, $\beta_{2,3}$, $\beta_{2,4}$ and
$\beta_{3,4}$ are not present, so we set them to zero.
Therefore one possible transformation is
\begin{displaymath}
(P,Q) = \left(
\begin{bmatrix}
1 & 0 & 3 & 0 & 3 & . \\
  & 1 & 0 & 1 & 0 & . \\
  &   & 1 & 0 & 1 & . \\
  &   &   & 1 & 0 & . \\
  &   &   &   & 1 & . \\
  &   &   &   &   & 1
\end{bmatrix},
\begin{bmatrix}
1 & . & . & . & . & . \\
  & 1 & 0 & 0 & 0 & -1 \\
  &   & 1 & 0 & -1 & 0 \\
  &   &   & 1 & 0 & 1 \\
  &   &   &   & 1 & -0 \\
  &   &   &   &   & 1
\end{bmatrix}
\right)
\end{displaymath}
leading to the admissible linear system
$\als{A}' = P \als{A} Q$
\begin{displaymath}
\begin{bmatrix}
1 & -x & 3 & -3x & 0 & 0 \\
. & 1 & -y & 1 & 0 & 0 \\
. & . & 1 & -x & 0 & 0 \\
. & . & . & 1 & -y & 1 \\
. & . & . & . & 1 & -\tsfrac{1}{3}x \\
. & . & . & . & . & 1 
\end{bmatrix}
s =
\begin{bmatrix}
. \\ . \\ . \\ . \\ . \\ 3
\end{bmatrix},
\end{displaymath}
compare with the ALS \eqref{eqn:pf.ex.1}.
$\als{A}'$ is the (minimal) product of
\begin{equation}\label{eqn:pf.ex.3}
\begin{bmatrix}
1 & -x & 3 & -3x \\
. & 1 & -y & 1 \\
. & . & 1 & -x \\
. & . & . & 1 
\end{bmatrix}
s =
\begin{bmatrix}
. \\ . \\ . \\ 1 
\end{bmatrix},
\quad
s =
\begin{bmatrix}
xyx - x \\
yx - 1 \\
x \\
1
\end{bmatrix}
\end{equation}
and
\begin{equation}\label{eqn:pf.ex.4}
\begin{bmatrix}
 1 & -y & 1 \\
 . & 1 & -\tsfrac{1}{3}x \\
 . & . & 1 
\end{bmatrix}
s =
\begin{bmatrix}
 . \\ . \\ 3
\end{bmatrix},
\quad
s =
\begin{bmatrix}
yx - 3 \\
x \\
3
\end{bmatrix}.
\end{equation}
Thus $p = (xyx-x)(yx-3)$.
The first factor is \emph{not} atomic because we could (pre-standard
admissibly) construct
either an $1 \times 2$ upper right block of zeros
(by subtracting $3$-times row~3 from row~1)
or an $2 \times 1$ upper right block of zeros
(by subtracting column~2 from column~4
and subtracting $2$-times row~3 from row~1)
in the system matrix of $\eqref{eqn:pf.ex.3}$.
On the other hand,
a brief look at the ALS $\eqref{eqn:pf.ex.4}$
immediately shows that the second factor is irreducible.

\section*{Acknowledgement}

I thank Franz Lehner for the fruitful discussions
and his support, Daniel Smertnig who helped me
to find some orientation in abstract non-commutative factorization
(especially with literature),
Roland Speicher for the creative environment in Saarbrücken
and the anonymous referees for the valuable comments.

\ifJOURNAL
This work has been supported by the Austrian FWF Project P25510-N26
``Spectra on Lamplighter groups and Free Probability''.
\fi

\ifJOURNAL
\bibliographystyle{elsarticle-harv}
\bibliography{doku}
\else
\bibliographystyle{alpha}
\bibliography{doku}
\addcontentsline{toc}{section}{Bibliography}
\fi

\end{document}